\newcolumntype{x}[1]{>{\centering\arraybackslash\hspace{0pt}}p{#1}}
\newtheorem{theorem}{Theorem}[section]
\newtheorem{definition}{Definition}[section]
\newtheorem{example}{Example}[section]
\newtheorem{remark}{Remark}[section]
\newtheorem{construction}{Construction}
\newcommand{\BB}{{\ensuremath{\mathcal{B}}}}
\newcommand{\zed}{{\ensuremath{\mathbb{Z}}}}
\newcommand{\GG}{{\ensuremath{\mathcal{G}}}}
\newcommand{\AAA}{{\ensuremath{\mathcal{A}}}}
\begin{document}


\title{In the Frame\thanks{The author's research is supported by  NSERC discovery grant RGPIN-03882.}}
\author[1,2]{Douglas R.\ Stinson}
\affil[1]{David R.\ Cheriton School of Computer Science\\ University of Waterloo\\
Waterloo, Ontario, N2L 3G1, Canada
}
\affil[2]{School of Mathematics and Statistics\\
Carleton University\\
Ottawa, Ontario, K1S 5B6, Canada}

\date{
\today
}

\maketitle

\begin{abstract}
In this expository paper, I survey Room frames and Kirkman frames, concentrating on the early history of these objects. I mainly look at the basic construction techniques, but I also provide some historical remarks and discussion. I also briefly discuss some other types of frames that have been investigated as well as some applications of frames to the construction of other types of designs.

\end{abstract}

\section{Room Squares and Room Frames}
\label{intro.sec}

I begin by briefly discussing the Room square problem. I refer to the 1992 survey by Dinitz and Stinson \cite{DS92a} for a thorough summary of the history of Room squares and related designs up to that time. 

\begin{definition}
A \emph{Room square of side $n$} is an $n$ by $n$ array, $F$, on a set $S$ of $n+1$ symbols, that satisfies the following properties:
\begin{enumerate}
\item every cell of $F$ either is empty or contains an unordered pair of symbols from $S$,
\item each symbol in $S$ occurs in exactly one cell in each row and each column of $F$, and 
\item every unordered pair of symbols occurs in exactly one cell of $F$.
\end{enumerate}
\end{definition}
A Room square of side seven is presented in Figure \ref{RS-7}.

It is clear that $n$ must be an odd positive integer if a Room square of side $n$ exists. Although Room squares have been studied since the 1850's, it was not until 1974 that a complete existence result was given by Wallis \cite{Wa74}, as a culmination of work by Mullin, Nemeth, Wallis and others. See \cite{MW75} for a short self-contained proof of the existence of Room squares.

\begin{theorem} There exists a Room square of side $n$ if and only if $n$ is an odd positive integer and $n \neq 3,5$.
\end{theorem}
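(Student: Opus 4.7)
The plan is to separate the ``only if'' and ``if'' directions of the biconditional.

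For necessity, I would first establish that $n$ must be odd by a counting argument: each row contains every symbol of $S$ exactly once, and each non-empty cell contains two symbols, so each row has $(n+1)/2$ non-empty cells, which forces $n+1$ to be even. To rule out $n=3$ and $n=5$, I would do a finite case analysis. For $n=3$, note that the three empty cells must form a transversal (one per row, one per column); once such a transversal is fixed, the row and column conditions force each of the six filled cells to contain one of only two complementary pairs on the four symbols, so these pairs are repeated three times each, contradicting condition~(3). The case $n=5$ is more involved but is likewise a classical finite exclusion that can be settled by enumeration.

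For sufficiency, I would combine direct constructions for prime-power sides with recursive constructions for composite or mixed sides so that every odd $n \geq 7$ is covered. The cleanest direct construction is the starter-adder method over a group (typically $\mathbb{Z}_n$ or $\mathbb{F}_q$ for $n=q$ an odd prime power): one selects a \emph{starter}, a partition of the nonzero group elements into unordered pairs whose pairwise differences exhaust the nonzero elements, together with a compatible \emph{adder}, a permutation indicating where each pair is placed in the first row, the rest of the square then being obtained by developing the first row cyclically through the group. Exhibiting starter-adder pairs, for instance via skew starters built from quadratic residues when $n \equiv 3 \pmod{4}$, yields Room squares for all odd prime powers $n \geq 7$.

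To cover the remaining odd integers, I would invoke recursive product-type constructions: given Room squares (or related ingredients) of smaller sides, paste them together using auxiliary designs (transversal designs, resolvable designs, or pairwise balanced designs) to obtain Room squares of larger sides. Small sporadic cases not covered by the general recursions must then be verified by direct construction or computer search. The main obstacle---and the reason this theorem took over a century to settle---is this last step: ensuring that the recursions and the small-case base combine to leave no gaps among odd $n \geq 7$. This gap-closing problem is precisely the motivation for the rest of the paper, since Room frames were introduced to make such recursive constructions modular and uniform.
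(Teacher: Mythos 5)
This theorem is not proved in the paper at all: it is stated as a cited classical result (Wallis \cite{Wa74}), and the reader is pointed to \cite{MW75} for a short self-contained proof. So the comparison here is between your outline and the known literature rather than an in-paper argument. Your necessity direction is essentially complete and correct: the counting argument that each row has $(n+1)/2$ filled cells forces $n$ odd, and your $n=3$ exclusion is sound (once the empty cells are normalized to the diagonal, the row and column partition conditions force only two complementary pairs to appear, each three times). Deferring $n=5$ to a finite enumeration is acceptable, though it is the longer of the two exclusions.

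The sufficiency direction, however, is a strategy rather than a proof, and the missing part is precisely the entire mathematical content of the theorem. You do not exhibit any starter--adder pair, you do not specify which recursive constructions you would use or what their hypotheses are (e.g., the singular direct product discussed later in this paper requires a pair of orthogonal Latin squares of order $v-w$, which fails for order $6$ and creates exactly the kind of gap you are worried about), and you do not verify that the base cases plus recursions cover every odd $n \geq 7$. You candidly identify this gap-closing step as ``the main obstacle,'' but identifying an obstacle is not the same as overcoming it; historically this is why the problem remained open until 1974, with side $257$ being the last case settled (via the type-$2^5$ Room frame, as the paper recounts). As written, the proposal establishes necessity and gestures at sufficiency; to be a proof it would need, at minimum, explicit starters for an infinite family, a precisely stated recursive composition theorem with its side conditions, and a verification (as in \cite{MW75}) that no odd $n \geq 7$ escapes the combination.
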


\begin{figure}

\begin{center}


\begin{tikzpicture}[scale=0.15]

\draw [thick] (0,0) -- (0,35);
\draw [thick] (5,0) -- (5,35);
\draw [thick] (10,0) -- (10,35);
\draw [thick] (15,0) -- (15,35);
\draw [thick] (20,0) -- (20,35);
\draw [thick] (25,0) -- (25,35);
\draw [thick] (30,0) -- (30,35);
\draw [thick] (35,0) -- (35,35);

\draw [thick] (0,0) -- (35,0);
\draw [thick] (0,5) -- (35,5);
\draw [thick] (0,10) -- (35,10);
\draw [thick] (0,15) -- (35,15);
\draw [thick] (0,20) -- (35,20);
\draw [thick] (0,25) -- (35,25);
\draw [thick] (0,30) -- (35,30);
\draw [thick] (0,35) -- (35,35);

\node at (12.5,2) {$0,4$};
\node at (22.5,2) {$3,5$};
\node at (27.5,2) {$1,2$};
\node at (32.5,2) {$7,6$};

\node at (7.5,7) {$6,3$};
\node at (17.5,7) {$2,4$};
\node at (22.5,7) {$0,1$};
\node at (27.5,7) {$7,5$};

\node at (2.5,12) {$5,2$};
\node at (12.5,12) {$1,3$};
\node at (17.5,12) {$6,0$};
\node at (22.5,12) {$7,4$};

\node at (7.5,17) {$0,2$};
\node at (12.5,17) {$5,6$};
\node at (17.5,17) {$7,3$};
\node at (32.5,17) {$4,1$};

\node at (2.5,22) {$6,1$};
\node at (7.5,22) {$4,5$};
\node at (12.5,22) {$7,2$};
\node at (27.5,22) {$3,0$};

\node at (2.5,27) {$3,4$};
\node at (7.5,27) {$7,1$};
\node at (22.5,27) {$2,6$};
\node at (32.5,27) {$5,0$};

\node at (2.5,32) {$7,0$};
\node at (17.5,32) {$1,5$};
\node at (27.5,32) {$4,6$};
\node at (32.5,32) {$2,3$};

\end{tikzpicture}


\end{center}
\caption{A Room square of side seven}
\label{RS-7}
\end{figure}

\begin{figure}[t]

\begin{center}


\begin{tikzpicture}[scale=0.15]

\node at (5,45) {\Large{$0,5$}};
\node at (15,35) {\Large{$1,6$}};
\node at (25,25) {\Large{$2,7$}};
\node at (35,15) {\Large{$3,8$}};
\node at (45,5) {\Large{$4,9$}};

\draw [thick] (0,0) -- (0,50);
\draw [thick] (5,0) -- (5,40);
\draw [thick] (10,0) -- (10,50);
\draw [thick] (15,0) -- (15,30);
\draw [thick] (15,40) -- (15,50);
\draw [thick] (20,0) -- (20,50);
\draw [thick] (25,0) -- (25,20);
\draw [thick] (25,30) -- (25,50);
\draw [thick] (30,0) -- (30,50);
\draw [thick] (35,0) -- (35,10);
\draw [thick] (35,20) -- (35,50);
\draw [thick] (40,0) -- (40,50);
\draw [thick] (45,10) -- (45,50);
\draw [thick] (50,0) -- (50,50);

\draw [thick] (0,0) -- (50,0);
\draw [thick] (0,5) -- (40,5);
\draw [thick] (0,10) -- (50,10);
\draw [thick] (0,15) -- (30,15);
\draw [thick] (40,15) -- (50,15);
\draw [thick] (0,20) -- (50,20);
\draw [thick] (0,25) -- (20,25);
\draw [thick] (30,25) -- (50,25);
\draw [thick] (0,30) -- (50,30);
\draw [thick] (0,35) -- (10,35);
\draw [thick] (20,35) -- (50,35);
\draw [thick] (0,40) -- (50,40);
\draw [thick] (10,45) -- (50,45);
\draw [thick] (0,50) -- (50,50);

\node at (7.5,2) {$3,6$};
\node at (17.5,2) {$2,8$};
\node at (27.5,2) {$1,5$};
\node at (37.5,2) {$0,7$};

\node at (2.5,7) {$1,3$};
\node at (12.5,7) {$7,8$};
\node at (22.5,7) {$0,6$};
\node at (32.5,7) {$2,5$};

\node at (7.5,12) {$1,7$};
\node at (17.5,12) {$4,5$};
\node at (27.5,12) {$6,9$};
\node at (42.5,12) {$0,2$};

\node at (2.5,17) {$2,6$};
\node at (12.5,17) {$0,9$};
\node at (22.5,17) {$1,4$};
\node at (47.5,17) {$5,7$};

\node at (2.5,22) {$8,9$};
\node at (12.5,22) {$3,5$};
\node at (37.5,22) {$4,6$};
\node at (47.5,22) {$0,1$};

\node at (7.5,27) {$4,8$};
\node at (17.5,27) {$0,3$};
\node at (32.5,27) {$1,9$};
\node at (42.5,27) {$5,6$};

\node at (7.5,32) {$2,9$};
\node at (22.5,32) {$5,8$};
\node at (32.5,32) {$0,4$};
\node at (42.5,32) {$3,7$};

\node at (2.5,37) {$4,7$};
\node at (27.5,37) {$0,8$};
\node at (37.5,37) {$5,9$};
\node at (47.5,37) {$2,3$};

\node at (17.5,42) {$7,9$};
\node at (27.5,42) {$3,4$};
\node at (37.5,42) {$1,2$};
\node at (47.5,42) {$6,8$};

\node at (12.5,47) {$2,4$};
\node at (22.5,47) {$3,9$};
\node at (32.5,47) {$6,7$};
\node at (42.5,47) {$1,8$};


\end{tikzpicture}


\end{center}
\caption{A Room frame of type $2^5$}
\label{type2^5}
\end{figure}

\begin{figure}[t]

\begin{center}

\begin{tikzpicture}[scale=0.15]

\draw [thick] (0,0) -- (0,60);
\draw [thick] (5,0) -- (5,50);
\draw [thick] (10,0) -- (10,60);
\draw [thick] (15,0) -- (15,40);
\draw [thick] (15,50) -- (15,60);
\draw [thick] (20,0) -- (20,60);
\draw [thick] (25,0) -- (25,30);
\draw [thick] (25,40) -- (25,60);
\draw [thick] (30,0) -- (30,60);
\draw [thick] (35,0) -- (35,20);
\draw [thick] (35,30) -- (35,60);
\draw [thick] (40,0) -- (40,60);
\draw [thick] (45,0) -- (45,10);
\draw [thick] (45,20) -- (45,60);
\draw [thick] (50,0) -- (50,60);
\draw [thick] (55,10) -- (55,60);
\draw [thick] (60,0) -- (60,60);

\draw [thick] (0,0) -- (60,0);
\draw [thick] (0,5) -- (50,5);
\draw [thick] (0,10) -- (60,10);
\draw [thick] (0,15) -- (40,15);
\draw [thick] (50,15) -- (60,15);
\draw [thick] (0,20) -- (60,20);
\draw [thick] (0,25) -- (30,25);
\draw [thick] (40,25) -- (60,25);
\draw [thick] (0,30) -- (60,30);
\draw [thick] (0,35) -- (20,35);
\draw [thick] (30,35) -- (60,35);
\draw [thick] (0,45) -- (10,45);
\draw [thick] (20,45) -- (60,45);
\draw [thick] (0,40) -- (60,40);
\draw [thick] (0,50) -- (60,50);
\draw [thick] (10,55) -- (60,55);
\draw [thick] (0,60) -- (60,60);

\node at (7.5,2) {$4,7$};
\node at (17.5,2) {$0,8$};
\node at (27.5,2) {$1,9$};
\node at (37.5,2) {$2,5$};
\node at (47.5,2) {$3,6$};

\node at (2.5,7) {$2,9$};
\node at (12.5,7) {$3,5$};
\node at (22.5,7) {$4,6$};
\node at (32.5,7) {$0,7$};
\node at (42.5,7) {$1,8$};

\node at (7.5,12) {$1,3$};
\node at (17.5,12) {$7,\mathrm{B}$};
\node at (27.5,12) {$5,6$};
\node at (37.5,12) {$0,\mathrm{A}$};
\node at (57.5,12) {$2,8$};

\node at (2.5,17) {$6,8$};
\node at (12.5,17) {$2,\mathrm{B}$};
\node at (22.5,17) {$0,1$};
\node at (32.5,17) {$5,\mathrm{A}$};
\node at (52.5,17) {$3,7$};

\node at (7.5,22) {$6,\mathrm{B}$};
\node at (17.5,22) {$5,9$};
\node at (27.5,22) {$4,\mathrm{A}$};
\node at (47.5,22) {$0,2$};
\node at (57.5,22) {$1,7$};

\node at (2.5,27) {$1,\mathrm{B}$};
\node at (12.5,27) {$0,4$};
\node at (22.5,27) {$9,\mathrm{A}$};
\node at (42.5,27) {$5,7$};
\node at (52.5,27) {$2,6$};

\node at (7.5,32) {$8,9$};
\node at (17.5,32) {$3,\mathrm{A}$};
\node at (37.5,32) {$1,4$};
\node at (47.5,32) {$5,\mathrm{B}$};
\node at (57.5,32) {$0,6$};

\node at (2.5,37) {$3,4$};
\node at (12.5,37) {$8,\mathrm{A}$};
\node at (32.5,37) {$6,9$};
\node at (42.5,37) {$0,\mathrm{B}$};
\node at (52.5,37) {$1,5$};

\node at (7.5,42) {$2,\mathrm{A}$};
\node at (27.5,42) {$0,3$};
\node at (37.5,42) {$9,\mathrm{B}$};
\node at (47.5,42) {$7,8$};
\node at (57.5,42) {$4,5$};

\node at (2.5,47) {$7,\mathrm{A}$};
\node at (22.5,47) {$5,8$};
\node at (32.5,47) {$4,\mathrm{B}$};
\node at (42.5,47) {$2,3$};
\node at (52.5,47) {$0,9$};

\node at (17.5,52) {$2,4$};
\node at (27.5,52) {$8,\mathrm{B}$};
\node at (37.5,52) {$6,7$};
\node at (47.5,52) {$1,\mathrm{A}$};
\node at (57.5,52) {$3,9$};

\node at (12.5,57) {$7,9$};
\node at (22.5,57) {$3,\mathrm{B}$};
\node at (32.5,57) {$1,2$};
\node at (42.5,57) {$6,\mathrm{A}$};
\node at (52.5,57) {$4,8$};

\node at (5,55) {\Large{$0,5$}};
\node at (15,45) {\Large{$1,6$}};
\node at (25,35) {\Large{$2,7$}};
\node at (35,25) {\Large{$3,8$}};
\node at (45,15) {\Large{$4,9$}};
\node at (55,5) {\Large{$\mathrm{A},\mathrm{B}$}};

\end{tikzpicture}

\end{center}
\caption{A Room frame of type $2^6$}
\label{type2^6}
\end{figure}

In this paper, I am interested in a generalization of a Room square called a Room frame. Here is a definition. 
\begin{definition}
Let $t$ and $u$ be positive integers and let  $S$ be a set of $tu$ symbols. Suppose that $S$ is partitioned into $u$ sets of size $t$, denoted $S_i$, $1 \leq i \leq u$. A \emph{Room frame of type $t^u$} is a $tu$ by $tu$ array, $F$,  that satisfies the following properties:
\begin{enumerate}
\item the rows and columns of $F$ are indexed by $S$,
\item every cell of $F$ either is empty or contains an unordered pair of symbols from $S$,
\item the subarrays of $F$ indexed by $S_i \times S_i$ are empty, for $1 \leq i \leq u$ (these empty subarrays are called \emph{holes}),
\item if $s \in S_i$, then row or column $s$ contains each symbol in $S \setminus S_i$ exactly once, and 
\item the unordered pairs of symbols occurring in $F$ are precisely the pairs
$\{x,y\}$, where $x, y$ are in different $S_i$'s (each such pair occurs in one cell of $F$).
\end{enumerate}
\end{definition}
Room frames of types $2^5$ and $2^6$ are depicted in Figures \ref{type2^5} and \ref{type2^6}, respectively.

The Room frames defined above are \emph{uniform}, which means that all the holes have the same size. Non-uniform Room frames have also received much study, but I mainly focus on the uniform case in this paper.

One initial observation is that a Room square of side $n$ equivalent to a Room frame of type $1^n$. Suppose $F$ is a Room square of side $n$. Pick a particular symbol $x$ and permute the rows and columns of $F$ so the cells containing $x$ are precisely the cells on the main diagonal (such a Room square is said to be \emph{standardized}). Then delete the pairs in these cells. Conversely, given a Room frame of type $1^n$, then we can introduce a new a symbol $x$ and place the pair $\{x,s\}$ in the cell $(s,s)$ for all $s$.

For example, suppose we start with the Room square of side seven that was presented in Figure \ref{RS-7}.
This Room square is already standardized, so we simply remove the pairs in the diagonal cells to construct a Room frame of type $1^7$. This Room frame is presented in Figure \ref{type1^7}. 

\begin{figure}

\begin{center}


\begin{tikzpicture}[scale=0.2]

\draw [thick] (0,0) -- (0,35);
\draw [thick] (5,0) -- (5,35);
\draw [thick] (10,0) -- (10,35);
\draw [thick] (15,0) -- (15,35);
\draw [thick] (20,0) -- (20,35);
\draw [thick] (25,0) -- (25,35);
\draw [thick] (30,0) -- (30,35);
\draw [thick] (35,0) -- (35,35);

\draw [thick] (0,0) -- (35,0);
\draw [thick] (0,5) -- (35,5);
\draw [thick] (0,10) -- (35,10);
\draw [thick] (0,15) -- (35,15);
\draw [thick] (0,20) -- (35,20);
\draw [thick] (0,25) -- (35,25);
\draw [thick] (0,30) -- (35,30);
\draw [thick] (0,35) -- (35,35);

\node at (12.5,2) {$0,4$};
\node at (22.5,2) {$3,5$};
\node at (27.5,2) {$1,2$};

\node at (7.5,7) {$6,3$};
\node at (17.5,7) {$2,4$};
\node at (22.5,7) {$0,1$};

\node at (2.5,12) {$5,2$};
\node at (12.5,12) {$1,3$};
\node at (17.5,12) {$6,0$};

\node at (7.5,17) {$0,2$};
\node at (12.5,17) {$5,6$};
\node at (32.5,17) {$4,1$};

\node at (2.5,22) {$6,1$};
\node at (7.5,22) {$4,5$};
\node at (27.5,22) {$3,0$};

\node at (2.5,27) {$3,4$};
\node at (22.5,27) {$2,6$};
\node at (32.5,27) {$5,0$};

\node at (17.5,32) {$1,5$};
\node at (27.5,32) {$4,6$};
\node at (32.5,32) {$2,3$};

\node at (2.5,32.5) {\Large{$0$}};
\node at (7.5,27.5) {\Large{$1$}};
\node at (12.5,22.5) {\Large{$2$}};
\node at (17.5,17.5) {\Large{$3$}};
\node at (22.5,12.5) {\Large{$4$}};
\node at (27.5,7.5) {\Large{$5$}};
\node at (32.5,2.5) {\Large{$6$}};

\end{tikzpicture}


\end{center}
\caption{A Room frame of type $1^7$}
\label{type1^7}
\end{figure}

Perhaps surprisingly, there is no Room frame of type $2^4$ (this was shown by exhaustive case analysis in \cite{St81a}). There is also 
no Room frame of type $1^5$, since such a structure would be equivalent to a (nonexistent) Room square of side five. It is also clear that $u \geq  4$ and  $t(u - 1)$ is even if a Room frame of type $t^u$ exists.  The following theorem gives complete existence results for uniform Room frames. This result is the culmination of many papers spanning a time period of $30$ years.

\begin{theorem}
\label{uniformRoomframe}
\cite{DS81,DL93,DSZ94,DW10,GZ93}
There exists a Room frame of type $t^u$ if and only if $u \geq  4$, $t(u - 1)$ is even,
and $(t, u) \neq (1, 5)$ or $(2, 4)$.
\end{theorem}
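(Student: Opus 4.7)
The plan is to dispatch necessity by a direct count and to obtain sufficiency by combining direct constructions for finitely many small base orders with a general recursive inflation machinery.

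For necessity, consider any nonempty cell $(r,c)$ with $r \in S_i$ and $c \in S_j$. By the row condition applied to row $r$, neither symbol in this cell can lie in $S_i$, and by the column condition applied to column $c$, neither can lie in $S_j$; then the different-parts condition forces the two symbols to lie in two further distinct parts $S_k, S_\ell$, so $u \geq 4$. Counting in row $s$ of part $S_i$ shows that row $s$ has exactly $t(u-1)/2$ nonempty cells, so $t(u-1)$ must be even. The two genuine exceptions are already known: $(1,5)$ is ruled out because Room frames of type $1^n$ are equivalent to standardized Room squares of side $n$ and no Room square of side $5$ exists, while $(2,4)$ is eliminated by the exhaustive search in \cite{St81a}.

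For sufficiency I would split on the parity of $t$; note that when $t$ is odd, $t(u-1)$ even forces $u$ odd. Small-type base Room frames are built directly using starter-adder methods in abelian groups, cyclic difference constructions, intransitive starters, and when necessary computer search. The case $t=1$ is already the existence theorem for Room squares, and each of \cite{DS81,DL93,DSZ94,DW10,GZ93} contributes direct constructions for a different slice of the parameter space. The recursive engine is \emph{frame inflation}: given a group-divisible or pairwise balanced design whose block sizes lie in a set $K$, together with input Room frames whose types correspond to $K$, one replaces each block by an appropriately indexed ingredient frame and interleaves the results into a larger Room frame. Standard variants include filling some or all holes with smaller Room frames or Room squares, adjoining one or more infinite points, and the singular direct product. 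Iterating these operations from a dense set of seeds propagates existence to all sufficiently large $u$ in each admissible residue class.

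The main obstacle is the bookkeeping of the remaining small cases. The inflation machinery covers all but finitely many exceptions once enough seeds exist, but producing seeds for every parity class and every small hole size is delicate: each cited paper supplies direct constructions that dispose of another family, with the boundary cases near $(t,u)=(1,5),(2,4)$ and for small odd $t$ absorbing most of the historical thirty-year effort.
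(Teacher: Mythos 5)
Your necessity argument is sound and essentially the standard one: a filled cell $(r,c)$ with $r\in S_i$, $c\in S_j$ forces its two entries into two further distinct holes, giving $u\geq 4$; the row count gives $t(u-1)/2$ filled cells per row, so $t(u-1)$ must be even; and the two sporadic exceptions are imported from the Room square spectrum and from \cite{St81a}. (One small caveat: the claim that no symbol of $S_i$ occurs in a row indexed by $S_i$ requires a short counting argument combining conditions 4 and 5 of the definition, since condition 4 as stated only prescribes the occurrences of symbols of $S\setminus S_i$.)

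The sufficiency half has a genuine gap: it mislocates where the difficulty of the theorem lies. Your recursion ``propagates existence to all sufficiently large $u$ in each admissible residue class,'' and that is indeed the right picture for fixed small $t$ --- the paper reduces $t=2$ and $t=4$ to finitely many seed values of $u$ via Hanani's $(u,K_{\geq 5})$- and $(u,K_{\geq 4})$-PBDs and then obtains general $t$ by inflation by MOLS. But PBD-closure in $u$ is powerless for the types $t^4$ and $t^5$, because there are no nontrivial pairwise balanced designs on four or five points; those two infinite families require a recursion in $t$, namely inflation by MOLS, which fails exactly for the multipliers $s=2$ and $s=6$. That failure is why $t\equiv 2 \bmod 4$ with $u=4$ resisted solution for decades: it forced the incomplete-Room-frame machinery of Ge and Zhu \cite{GZ93}, the complete-ordered-transversal constructions of \cite{DSZ94}, and ultimately hill-climbing searches for the types $6^4$, $10^4$ and $14^4$ (the last not found until 2010 \cite{DW10}) --- none of which are starter--adder or block-replacement constructions of the kind your outline invokes, and the last three of which provably cannot come from orthogonal frame starters. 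Without an argument covering $t^4$ for $t\equiv 2,10\bmod{12}$ and $t^5$ for $t\in\{11,13,17,19\}$, the proof is incomplete; these are not ``boundary cases near $(1,5)$ and $(2,4)$'' but an infinite family that absorbed most of the thirty-year effort behind the theorem.
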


I now define a special type of Room frame that has received considerable attention.

\begin{definition}
\label{skew.defn}
Suppose $S$ is a symbol set that is partitioned into $u$ sets of size $t$, denoted $S_i$, $1 \leq i \leq u$. A Room frame $F$ of type $t^u$ is \emph{skew} if, for any $r$ and $s$ where $r$ and $s$ are not in the same $S_i$, precisely one of the two cells $F(r,s)$ and $F(s,r)$ is empty. This skew definition also applies to Room squares. 
\end{definition}

Skew Room frames are more difficult to construct than ``ordinary'' (i.e., non-skew) Room frames. 
In the case of skew Room squares, the following result was shown in 1981.

\begin{theorem} 
\cite{St81c}
\label{srs.thm}
There exists a skew Room square of side $n$ if and only if $n$ is an odd positive integer and $n \neq 3,5$.
\end{theorem}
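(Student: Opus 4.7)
The necessity of the stated conditions is immediate: a skew Room square of side $n$ is in particular a Room square of side $n$, so the Room square existence theorem quoted above forces $n$ to be an odd positive integer with $n \notin \{3,5\}$.

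For sufficiency, my plan is to combine a direct starter--adder construction with a Room-frame-based recursion. The direct ingredient is the notion of a \emph{skew starter} in $\mathbb{Z}_n$: a starter $\{\{a_i,b_i\} : 1 \le i \le (n-1)/2\}$ with the extra property that the $n-1$ elements $\pm(a_i+b_i)$ are all distinct and nonzero. A skew starter, together with a compatible strong adder, produces a skew Room square of side $n$. I would first dispose of prime-power orders $q \geq 7$: when $q \equiv 3 \pmod 4$ the quadratic residues of $\mathrm{GF}(q)$ yield a skew starter directly, and when $q \equiv 1 \pmod 4$ a more delicate patterned pairing of the residues works. This takes care of all prime-power sides at least $7$.

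To reach composite odd orders I would use a ``filling in holes'' recursion, which is where Room frames enter naturally. Given a skew Room frame of type $t^u$ together with a skew Room square of side $t+1$, one adjoins a new symbol $\infty$, standardizes the small Room square so that $\infty$ lies on its main diagonal, and fills each hole $S_i \times S_i$ with a copy of this standardized square on $S_i \cup \{\infty\}$. The skewness of the frame combined with the skewness of the filling squares yields a skew Room square of side $tu+1$. Since skew Room frames of many types are known to exist (the skew analogue of Theorem~\ref{uniformRoomframe}), this handles all sufficiently large odd $n$ once the recursion is seeded with the small prime-power cases above.

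The main obstacle is twofold. First, one has to prove a sufficiently rich existence result for skew Room frames in order to drive the recursion; this is itself a substantial undertaking, and in practice would be carried out in parallel with the skew Room square problem rather than as a black box. Second, a finite list of small sporadic orders $n$ that are neither prime powers nor reachable by the recursion must be dispatched by ad hoc starters, some of which are typically found by computer search. Checking that the explicit sporadic constructions exactly complement the recursive coverage is the painstaking bookkeeping step that makes this deceptively short theorem statement rather long to prove in full.
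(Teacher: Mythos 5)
First, a point of comparison: the paper does not actually prove Theorem~\ref{srs.thm}. It states the result with a citation to \cite{St81c} and remarks only that the short proof given there ``is based on skew Room frames,'' and that the theorem was the culmination of a long series of papers. So there is no in-paper argument to match you against line by line. That said, the strategy you outline --- skew starters (with the $\pm$(sums) condition) for prime-power sides, split by $q \bmod 4$, combined with a filling-in-holes recursion driven by skew Room frames --- is precisely the strategy the paper attributes to \cite{St81c}, and it is the historically correct one. Your reduction of necessity to the ordinary Room square theorem is fine.

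As a proof, however, the proposal has two genuine gaps, both of which you candidly flag yourself: the existence of skew Room frames in sufficient supply (now recorded in the paper's skew-frame theorem, itself a hard result proved over several papers) and the sporadic small orders not reached by the recursion. Since these two items carry essentially all the weight of the theorem, what you have is a correct roadmap rather than a proof. There is also a concrete bookkeeping error in your filling-in-holes step: a hole $S_i\times S_i$ of a frame of type $t^u$ is a $t\times t$ subarray, so it cannot literally be filled with a square of side $t+1$, and adjoining a single symbol $\infty$ to $tu$ symbols yields $tu+1$ symbols, which is the symbol count of a Room square of side $tu$, not $tu+1$. The correct version for the side-$(tu+1)$ conclusion adds one new row, one new column and \emph{two} new symbols, and fills each $(t+1)\times(t+1)$ region formed by a hole together with the new row and column with a (skew, suitably normalized) Room square of side $t+1$ on $S_i$ plus the two new symbols, taking care that the shared corner cell is filled exactly once; this forces $t$ to be even, which is consistent with the frames of type $2^u$ actually used in \cite{St81c}. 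Verifying that skewness survives this gluing, and that the pieces exactly cover the odd integers $n\geq 7$, is the part of the argument that still has to be written.
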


Theorem \ref{srs.thm} was the consequence of a long series of papers by a number of different authors.  
The paper \cite{St81c} gives a short proof of Theorem \ref{srs.thm} that is based on skew Room frames.

For skew Room frames of type $t^u$, the following theorem is the current state of knowledge. Note that this theorem
generalizes Theorem \ref{srs.thm}, which is the special case $t=1$.

\begin{theorem}
\cite{St87a,CZ96,ZG07}
There exists a skew Room frame of type $t^u$ if and only if $u \geq  4$, $t(u - 1)$ is even,
$(t, u) \neq (1, 5)$ or $(2, 4)$, with the following possible exceptions:
\begin{enumerate}
\item $u = 4$ and $t \equiv 2 \pmod{4}$;
\item $u = 5$ and $t \in \{17, 19, 23, 29, 31\}$.
\end{enumerate}
\end{theorem}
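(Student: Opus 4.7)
The plan is to separate necessity from sufficiency. For necessity, the conditions $u \geq 4$ and $t(u-1)$ even, together with the exclusion of $(t,u) = (1,5)$ and $(2,4)$, are inherited for free from Theorem~\ref{uniformRoomframe}, since every skew Room frame is in particular a Room frame. So all the content sits in sufficiency, and I would attack that in three tiers, mirroring the standard strategy used throughout the Room frame literature.

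First, I would build a finite library of small base frames by the \emph{starter--adder method} over an abelian group $G$. One searches for a pair $(\mathcal{S},\mathcal{A})$ consisting of a \emph{skew starter} $\mathcal{S} = \{\{x_i,y_i\}\}$ in $G$ whose differences $\pm(x_i - y_i)$ partition $G^\ast$ in the appropriate way, together with a compatible \emph{adder} sequence $\mathcal{A} = \{a_i\}$ so that developing the translates $\{x_i+g, y_i+g\}$ cyclically through $G$ yields a Room frame whose skewness is forced by the skew choice of starter. One typically works in $\zed_{t(u-1)}$ (for $u$ odd) or in $\zed_{tu}$ relative to a subgroup of order $t$ that plays the role of a hole. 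Base constructions are required for $u \in \{4,5,6,7\}$ and for an initial range of $t$ in each; some are hand-built, others are found by computer search.

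Second, I would apply a \emph{recursive inflation} construction. The key lemma is: if a skew Room frame of type $t^u$ exists and a transversal design TD$(u,m)$ exists (equivalently, $u-2$ MOLS of order $m$), then one can build a skew Room frame of type $(tm)^u$ by ``blowing up'' each symbol into $m$ copies and using the transversal design to route pairs through the blow-ups, with the skewness of the transversal resolution preserving the skewness condition on each pair of cells. Combined with a ``filling the holes'' lemma --- which plugs smaller skew Room squares or frames into the holes of a (possibly non-uniform) skew Room frame and then re-groups the resulting symbol set --- these two tools propagate admissibility from the finite seed of base frames to essentially all admissible $(t,u)$. A further PBD-closure argument, using pairwise balanced designs whose block sizes all lie in the already-settled range of $u$-values, fills most of the remaining gaps by substituting a skew Room frame of type $t^k$ for each block of size $k$.

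The main obstacle will be the residual sporadic cases, and they account for exactly the two possible-exception families in the theorem. For $u = 4$ with $t \equiv 2 \pmod 4$, the parity condition is satisfied but the inflation machinery needs a TD$(4,m)$ with $m$ odd, which severely restricts which $t$'s can be reached; and direct starter--adder searches at $u=4$ notoriously fail for $t$ twice an odd number, in analogy with the non-existence at $(t,u)=(2,4)$. For $u=5$ and $t \in \{17,19,23,29,31\}$, one has no TD$(5,m)$ for the relevant small $m$'s and no PBD of block size $5$ on the required number of points, so one is thrown back on direct construction, which has not yet succeeded. These are exactly the cases where neither inflation, PBD-closure, nor existing starter searches give an answer, which is why the theorem lists them as possible exceptions rather than either confirmed existences or non-existences.
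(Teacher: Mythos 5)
You should first be aware that the paper does not prove this theorem: it is quoted from \cite{St87a,CZ96,ZG07} with no argument attached, so there is no in-paper proof to match yours against. Your outline is nonetheless consistent with the methodology those papers (and the survey's treatment of the non-skew analogue) actually use --- necessity inherited from ordinary Room frames, a finite library of base cases from starter--adder constructions, inflation by MOLS, filling in holes, and PBD-closure on $u$, with the possible exceptions confined to $u=4$ and $u=5$ precisely because PBD-closure is vacuous for so few holes. As a strategy sketch it is sound, and of course the actual weight of the theorem rests on the explicit finite case analyses in the three cited papers, which no sketch can replace.

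That said, several of the details you do commit to are wrong. (i) For a frame of type $t^u$ the frame starter lives in $G\setminus H$ with $|G|=tu$ and $|H|=t$; it does not live in $\zed_{t(u-1)}$ --- the quantity $t(u-1)$ is the number of non-hole elements the starter must cover, not the group order. (ii) Skewness of the developed frame is not ``forced by the skew choice of starter''; it comes from the \emph{pair} of starters being skew-orthogonal, i.e., the adders together with their negatives exhausting $G\setminus H$, as described in Section 2.1. A ``skew starter'' is a different object in this literature. (iii) Your explanation of the $u=4$ exceptions via scarcity of TD$(4,m)$ with $m$ odd is backwards: two MOLS of order $m$ exist for every odd $m$, so inflation is never blocked by the transversal design. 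The genuine obstruction is that $t\equiv 2\pmod 4$ factors only as an odd number times a number $\equiv 2\pmod 4$, and a base of odd side $s$ fails the parity condition $s(u-1)$ even when $u=4$; hence inflation would require a base skew frame of type $s^4$ with $s\equiv 2\pmod 4$, none of which is known --- and frame starters provably cannot produce such frames, which is exactly why even the non-skew cases $6^4$, $10^4$, $14^4$ had to be found by hill-climbing. (iv) For $u=5$ the listed values of $t$ are primes, so inflation from a proper divisor is unavailable and the would-be base $1^5$ does not exist; ``no PBD of block size $5$ on the required number of points'' is not the relevant obstruction, since PBD-closure acts on the number of holes $u$, not on $t$.
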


\section{Constructions for Room Frames}

\subsection{Orthogonal Starters}

Constructions for Room frames come in two flavours: direct and recursive.
The main direct construction is based on orthogonal frame starters. (It is also possible to construct ``random-looking'' examples of Room squares and Room frames using hill-climbing algorithms; see \cite{DS87,DS92a}.) I briefly discuss the method of orthogonal (frame) starters in this section. For more information on frame starters and orthogonal frame starters, see the recent survey \cite{St22}. Note that all of our definitions will refer to additive abelian groups. 

\begin{definition}
Let $G$ be an abelian group of order $g$ and let $H$ be a subgroup of order $h$. A \emph{frame starter} in $G \setminus H$ is a set of $(g-h)/2$ pairs $\{ \{x_i,y_i \} : 1 \leq i \leq (g-h)/2\}$ that satisfies the following two properties:
\begin{enumerate}
\item $\{ x_i, y_i :  1 \leq i \leq (g-h)/2 \} = G \setminus H$.
\item $\{ \pm (x_i-y_i) :  1 \leq i \leq (g-h)/2 \} = G \setminus H$.
\end{enumerate}
\end{definition}
In other words, the pairs in the frame starter form a partition of $G \setminus H$, and the differences obtained from these pairs also partitions $G \setminus H$.

\begin{definition}
Suppose that $S_1 = \{ \{x_i,y_i \} : 1 \leq i \leq (g-h)/2\}$ and $S_2 = \{ \{u_i,v_i \} : 1 \leq i \leq (g-h)/2\}$
are both frame starters in $G \setminus H$. Without loss of generality, assume that
$y_i - x_i = v_i - u_i$ for $1 \leq i \leq (g-h)/2$.  We say that $S_1$ and $S_2$ are
\emph{orthogonal} if the following two properties hold:
\begin{enumerate}
\item $y_i - v_i \not\in H$ for $1 \leq i \leq (g-h)/2$.
\item $y_i - v_i \neq y_j - v_j$ if $1 \leq i,j \leq (g-h)/2$, $i \neq j$. 
\end{enumerate}
\end{definition}
Thus, when we match up the pairs in $S_1$ and $S_2$ according to their differences, the ``translates'' are distinct elements of $G \setminus H$. These translates are often called an \emph{adder}.

\begin{example}
Suppose $G =\zed_{10}$ and $H = \{0,5\}$. Here are two orthogonal frame starters in $G\setminus H$:
\[
\begin{array}{l}
S_1 = \{ \{3,4\}, \{7,9\}, \{8,1\}, \{2,6\} \} \\
S_2 = \{ \{6,7\}, \{1,3\}, \{9,2\}, \{4,8\} \}. 
\end{array}
\]
\end{example}

\begin{example}
Suppose $G =\zed_{7}$ and $H = \{0\}$. Here are two orthogonal frame starters in $G\setminus H$:
\[
\begin{array}{l}
S_1 = \{ \{3,4\}, \{2,5\}, \{1,6\} \} \\
S_2 = \{ \{2,3\}, \{5,1\}, \{6,4\} \}. 
\end{array}
\]
\end{example}

\begin{example}
\label{z15.exam}
Suppose $G =\zed_{15}$ and $H = \{0,5,10\}$. Here are two orthogonal frame starters in $G\setminus H$:
\[
\begin{array}{l}
S_1 = \{ \{1,2\}, \{9,11\}, \{3,6\}, \{8,12\}, \{13,4\}, \{7,14\} \} \\
S_2 = \{ \{2,3\}, \{11,13\}, \{9,12\} , \{4,8\}, \{1,7\}, \{14,6\}\}. 
\end{array}
\]
\end{example}

\begin{example}
\label{z4z4.exam}
Suppose $G =\zed_{4} \times \zed_{4}$ and $H = \{(0,0), (0,2), (2,0), (2,2)\}$. Here are two orthogonal frame starters in $G\setminus H$:
\[
\begin{array}{ll}
S_1 = &\{ \{(1,1),(3,2)\}, \{(3,0),(3,1)\}, \{(2,1),(3,3)\}, \{(0,3),(1,3)\},\\
&   \{(1,0),(2,3)\}, \{(0,1),(1,2)\} \} \\
S_2 = &\{ \{(1,2),(3,3)\}, \{(1,3),(1,0)\}, \{(1,1),(2,3)\}, \{(3,1),(0,1)\},\\
&   \{(2,1),(3,0)\}, \{(3,2),(0,3)\} \} \\
\end{array}
\]
\end{example}

Orthogonal frame starters can be used to construct a Room frame of the relevant type. The resulting Room frame has $G$ in its automorphism group. In the case where $|H| = 1$, then we have a \emph{starter}. Orthogonal starters can be used to generate Room squares.

Suppose that $S_1 = \{ \{x_i,y_i \} : 1 \leq i \leq (g-h)/2\}$ and $S_2 = \{ \{u_i,v_i \} : 1 \leq i \leq (g-h)/2\}$
are orthogonal frame starters in $G \setminus H$, where $y_i - x_i = v_i - u_i$ for $1 \leq i \leq (g-h)/2$. 
$S_1$ and $S_2$ are \emph{skew-orthogonal} if $y_i - v_i \neq -(y_j - v_j)$ if $1 \leq i,j \leq (g-h)/2$, $i \neq j$. Equivalently, the set of adders and their negatives is precisely $G \setminus H$. It can be verified that all four examples of orthogonal frame starters presented above are in fact skew-orthogonal. As one would suspect, skew-orthogonal starters give rise to skew Room frames (e.g., see Figure \ref{type1^7}).

It has been proven that orthogonal frame starters cannot be used to construct a Room frame of type $2^6$ (in fact, there is no frame starter in $G \setminus H$ when $|G| = 12$ and $|H| = 2$). However, a modification known as \emph{orthogonal intransitive frame starters} permit the construction of these (and many other useful) Room frames. The Room frame depicted in Figure \ref{type2^6} illustrates the basic idea. The ten by ten square in the upper left of the diagram is developed modulo $10$, similar to a Room frame obtained from orthogonal frame starters, except that there are two additional fixed points, denoted $A$ and $B$. The last two rows and the last two columns are developed modulo $10$, and there is a hole containing the two fixed points. We do not give a formal definition, but the associated orthogonal intransitive frame starters, denoted by the quadruple $(S_1,C,S_2,R)$, are defined on 
$\zed_{10} \cup \{A,B\}$ and they are obtained from the first row and the first column of the Room frame. Note that $R$ and $C$ refer  to the last two rows and the last two columns of the Room frame, respectively. 
\[
\begin{array}{l}
S_1 = \{ \{7,9\}, \{1,2\}, \{6,A\}, \{3,B\} \}  \\
C =  \{ \{4,8\} \}\\
S_2 = \{ \{6,8\}, \{3,4\}, \{7,A\}, \{1,B\} \}\\ 
R = \{ \{2,9\} \}.
\end{array}
\]

\subsection{Existence of Uniform Room Frames}

I now discuss some aspects of the proof of Theorem \ref{uniformRoomframe}.
The cases $t=1$ are of course equivalent to Room squares. Thus there exists a Room frame of type $1^u$ if and only if $u$ is odd and $u \geq 7$. The most important cases in establishing the general existence result (Theorem \ref{uniformRoomframe}) are when $t=2$ or $t=4$. I examine these cases now.

\subsubsection{The Case $t=2$} 

Since a Room frame of type $2^4$ does not exist, the goal was to prove that there is a Room frame of type $2^u$ for all $u \geq 5$. Jeff Dinitz and I considered this problem in detail in \cite{DS81}. 

Our approach used pairwise balanced designs (PBDs). A \emph{pairwise balanced design} is a pair
$(X,\AAA)$, where $X$ is a finite set of \emph{points} and $\AAA$ is a set of subsets of $X$ (called \emph{blocks}), with the property that every pair of points is contained in a unique block. Assume that $|A| > 1$ for every $A \in \AAA$. We say that $(X,\AAA)$ is a $(v,K)$-PBD if $|X| = v$ and $|A| \in K$ for every $A \in \AAA$. Also, a set $K$ is \emph{PBD-closed} if $v \in K$ whenever there exists a $(v,K)$-PBD. Finally, if $K = \{k\}$, then a $(v,K)$-PBD is a
$(v,k,1)$-BIBD (i.e., a \emph{balanced incomplete block design}).

For any fixed integer $t$, the set $U_t = \{ u : \text{ there exists a Room frame of type } t^u\}$ is PBD-closed. 
Thus, the natural approach is to construct a sufficient number of ``small'' Room frames by various appropriate techniques and then appeal to PBD-closure. For PBDs with block sizes not less than five, the following classical result due to Hanani is relevant.

\begin{theorem}\cite[Lemma 5.18]{Han75}
Let 
\[K_{\geq 5}  = \{ 5, 6, \dots, 20, 22,23,24,27,28,29,32,33,34,39\}.\]
Then, for all $u \geq 5$, there is a $(u,K_{\geq 5})$-PBD.
\end{theorem}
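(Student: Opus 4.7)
The plan is to exhibit a $(u,K_{\geq 5})$-PBD for every $u \geq 5$ via three stages: a trivial stage that handles $u \in K_{\geq 5}$ by taking a single block on all $u$ points; a direct-construction stage for the small ``gap'' values $u \not\in K_{\geq 5}$; and a recursive stage, based on truncated transversal designs, that handles all sufficiently large $u$ uniformly.

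The workhorse is the following standard truncation argument. Given a $\mathrm{TD}(k,n)$ (equivalent to $k-2$ mutually orthogonal Latin squares of order $n$), the $kn$ points are partitioned into $k$ groups of size $n$ and covered by $n^2$ transversal blocks of size $k$. Deleting all but $a$ points of one group, for any $0 \leq a \leq n$, yields a $\bigl((k-1)n + a,\ \{k-1,k\}\bigr)$-PBD. Taking $k = 6$ thus gives a $(u,\{5,6\})$-PBD, hence a $(u,K_{\geq 5})$-PBD, for every $u \in [5n,6n]$, provided $4$ MOLS of order $n$ exist. Since $4$ MOLS of order $n$ exist for every prime power $n \geq 5$, and since consecutive intervals $[5n,6n]$ and $[5(n+1),6(n+1)]$ overlap once $n \geq 5$, these truncated $\mathrm{TD}(6,n)$'s cover every sufficiently large $u$ in one shot.

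For the small end I would enumerate as follows. The value $u = 21$ is handled by the projective plane $\mathrm{PG}(2,4)$, a $(21,5,1)$-BIBD. The range $[25,30]$ is covered by $\mathrm{TD}(6,5)$. The value $u = 31$ is handled by the projective plane $\mathrm{PG}(2,5)$, a $(31,6,1)$-BIBD. The ranges $[35,42]$, $[40,48]$, $[45,54]$, and $[55,66]$ are covered by $\mathrm{TD}(6,n)$ for $n = 7,8,9,11$ respectively; for every integer $n \geq 11$ for which $4$ MOLS of order $n$ exist, the interval $[5n,6n]$ then disposes of all subsequent $u$.

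The main obstacle is not a single design-theoretic step but rather the bookkeeping needed to guarantee full coverage: the nonexistence of $\mathrm{TD}(6,6)$ (no two MOLS of order $6$) and the uncertain status of $\mathrm{TD}(6,10)$ force one to bridge certain stretches by auxiliary ingredients such as projective planes, or by using an overlapping $\mathrm{TD}(6,n)$ for a neighbouring $n$. This is also the conceptual reason $K_{\geq 5}$ is defined with exactly its ``extra'' entries $22,23,24,27,28,29,32,33,34,39$: these are the residual small integers for which no convenient recursive construction of a $(u,\{5,\dots,20\})$-PBD is available, so they must be absorbed directly into the list of admissible block sizes to make the statement go through.
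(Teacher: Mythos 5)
The paper does not prove this statement---it is quoted verbatim from Hanani \cite[Lemma 5.18]{Han75}---so I am judging your argument on its own terms, and it has a genuine gap. Your workhorse lemma is misstated: deleting all but $a$ points of one group of a $\mathrm{TD}(k,n)$ does \emph{not} yield a $\bigl((k-1)n+a,\{k-1,k\}\bigr)$-PBD, because the transversal blocks cover no pair lying inside a group. You must adjoin the $k-1$ intact groups (size $n$) and the truncated group (size $a$) as blocks, so the block sizes are $\{k-1,k,n\}\cup\{a\}$; this forces $n\in K_{\geq 5}$ (harmless for $n \geq 5$, by induction) but also $a\in\{0,1\}$ or $a\in K_{\geq 5}$-admissible, and $a\in\{2,3,4\}$ is fatal. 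Hence $\mathrm{TD}(6,n)$ produces only $\{5n,5n+1\}\cup[5n+5,6n]$, never $5n+2$, $5n+3$, $5n+4$. Concretely, your plan fails at $u=37,38$ inside your own enumeration ($a=2,3$ in $\mathrm{TD}(6,7)$), and beyond it at $u=43,44,49,57,58,59$ (note that $\mathrm{TD}(6,10)$ is unavailable and $6\cdot 9=54<57$, so no neighbouring interval rescues $57,58,59$). Indeed, the ``extra'' entries $22,23,24,\,27,28,29,\,32,33,34,\,39$ of $K_{\geq 5}$ are exactly the values $5n+\{2,3,4\}$ for $n=4,5,6,7$ that cannot be repaired, whereas $37$ and $38$ are \emph{not} in the list and therefore must be constructed---which your toolkit cannot do.

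The standard repair, and essentially what Hanani's machinery does, is to allow transversal designs with more than six groups and to truncate \emph{several} groups to sizes in $\{0,1\}\cup K_{\geq 5}$, keeping every block of size at least $5$: for instance $37=5\cdot 7+1+1$ from a $\mathrm{TD}(7,7)$, $38=5\cdot 7+1+1+1$ from a $\mathrm{TD}(8,7)$, $43=5\cdot 8+1+1+1$ from a $\mathrm{TD}(8,8)$, $58=5\cdot 11+1+1+1$ from a $\mathrm{TD}(8,11)$, while $49$ and $57$ come from $\mathrm{AG}(2,7)$ and $\mathrm{PG}(2,7)$. With multi-group truncation the asymptotic coverage argument also closes the residual holes at $5n+2,5n+3,5n+4$; as written, your single-truncation scheme does not, so the proof is incomplete.
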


Therefore, if we can construct Room frames of type $2^u$ for all $u \in K_{\geq 5}$, we could then conclude that there is a Room frame of type $2^u$ for all $u \geq 5$.

We took the following approach in \cite{DS81}. First, we have already exhibited a Room frame of type $2^5$ in Figure \ref{type2^6}. For odd values $u > 5 \in K_{\geq 5}$, we made use a
``doubling construction'' which creates a Room frame of type $2^u$ from a skew Room square of side $u$. 
The idea is as follows:
\begin{construction}[Doubling Construction] 
{\rm \quad \\ \vspace{-.2in}
\begin{enumerate}
\item We construct a Room frame, say $F$, of type $1^u$ from a skew Room frame of side $u$. Suppose the holes are 
$\{i\}$, for $0 \leq i \leq u-1$. 
\item Construct a second  Room frame of type $1^u$ by transposing  $F$, in which the holes are $\{i'\}$, for $0 \leq i \leq u-1$. Superimpose $F$ and $F'$.
\item Construct a pair of latin squares, say $L$ and $L'$, of order $u$, having a common transversal. Assume that $L$ is on symbols $\{i'\}$, for $0 \leq i \leq u-1$, and $L'$ is on symbols $\{i'\}$, for $0 \leq i \leq u-1$. Superimpose $L$ and $L'$.  Assume the common transversal is $(i,i')$ for $0 \leq i \leq u-1$.
\item Construct an array of side $2u$ in which the superposition of $F$ and $F'$ is in the top left corner and the superposition of $L$ and $L'$ is in the bottom right corner. 
Finally, remove the common transversal from $L$ and $L'$.
The result is the desired Room frame of type $2^u$.
\end{enumerate}
}
\end{construction}


\begin{example}
\label{doubling.exam}
We construct a Room frame of type $2^7$ from a skew Room square of side seven in Figure \ref{type2^7}. One of the holes is indicated in grey.
\begin{figure}[t]

\begin{center}

\begin{tikzpicture}[scale=0.18]

\filldraw [ultra thick, fill = gray!40] (0,70) -- (5,70) -- (5,65) -- (0,65) -- (0,70);
\draw [ultra thick, fill = gray!40] (35,70) -- (40,70) -- (40,65) -- (35,65) -- (35,70);
\draw [ultra thick, fill = gray!40] (0,35) -- (5,35) -- (5,30) -- (0,30) -- (0,35);
\draw [ultra thick, fill = gray!40] (35,35) -- (40,35) -- (40,30) -- (35,30) -- (35,35);

\draw [ultra thick] (0,0) -- (0,70);
\draw [thick] (5,0) -- (5,70);
\draw [thick] (10,0) -- (10,70);
\draw [thick] (15,0) -- (15,70);
\draw [thick] (20,0) -- (20,70);
\draw [thick] (25,0) -- (25,70);
\draw [thick] (30,0) -- (30,70);
\draw [ultra thick] (35,0) -- (35,70);
\draw [thick] (40,0) -- (40,70);
\draw [thick] (45,0) -- (45,70);
\draw [thick] (50,0) -- (50,70);
\draw [thick] (55,0) -- (55,70);
\draw [thick] (60,0) -- (60,70);
\draw [thick] (65,0) -- (65,70);
\draw [ultra thick] (70,0) -- (70,70);

\draw [ultra thick] (0,0) -- (70,0);
\draw [thick] (0,5) -- (70,5);
\draw [thick] (0,10) -- (70,10);
\draw [thick] (0,15) -- (70,15);
\draw [thick] (0,20) -- (70,20);
\draw [thick] (0,25) -- (70,25);
\draw [thick] (0,30) -- (70,30);
\draw [ultra thick] (0,35) -- (70,35);
\draw [thick] (0,40) -- (70,40);
\draw [thick] (0,45) -- (70,45);
\draw [thick] (0,50) -- (70,50);
\draw [thick] (0,55) -- (70,55);
\draw [thick] (0,60) -- (70,60);
\draw [thick] (0,65) -- (70,65);
\draw [ultra thick] (0,70) -- (70,70);

\node at (7.5,67) {$6,2$};
\node at (12.5,67) {$4,5$};
\node at (17.5,67) {$4',6'$};
\node at (22.5,67) {$1,3$};
\node at (27.5,67) {$2',3'$};
\node at (32.5,67) {$5',1'$};

\node at (2.5,62) {$6',2'$};
\node at (12.5,62) {$0,3$};
\node at (17.5,62) {$5,6$};
\node at (22.5,62) {$5',0'$};
\node at (27.5,62) {$2,4$};
\node at (32.5,62) {$3',4'$};

\node at (2.5,57) {$4',5'$};
\node at (7.5,57) {$0',3'$};
\node at (17.5,57) {$1,4$};
\node at (22.5,57) {$6,0$};
\node at (27.5,57) {$6',1'$};
\node at (32.5,57) {$3,5$};

\node at (2.5,52) {$4,6$};
\node at (7.5,52) {$5',6'$};
\node at (12.5,52) {$1',4'$};
\node at (22.5,52) {$2,5$};
\node at (27.5,52) {$0,1$};
\node at (32.5,52) {$0',2'$};

\node at (2.5,47) {$1',3'$};
\node at (7.5,47) {$5,0$};
\node at (12.5,47) {$6',0'$};
\node at (17.5,47) {$2',5'$};
\node at (27.5,47) {$3,6$};
\node at (32.5,47) {$1,2$};

\node at (2.5,42) {$2,3$};
\node at (7.5,42) {$2',4'$};
\node at (12.5,42) {$6,1$};
\node at (17.5,42) {$0',1'$};
\node at (22.5,42) {$3',6'$};
\node at (32.5,42) {$4,0$};

\node at (2.5,37) {$5,1$};
\node at (7.5,37) {$3,4$};
\node at (12.5,37) {$3',5'$};
\node at (17.5,37) {$0,2$};
\node at (22.5,37) {$1',2'$};
\node at (27.5,37) {$4',0'$};

\node at (42.5,32) {$6,2'$};
\node at (47.5,32) {$5,4'$};
\node at (52.5,32) {$4,6'$};
\node at (57.5,32) {$3,1'$};
\node at (62.5,32) {$2,3'$};
\node at (67.5,32) {$1,5'$};

\node at (47.5,27) {$0,3'$};
\node at (52.5,27) {$6,5'$};
\node at (57.5,27) {$5,0'$};
\node at (62.5,27) {$4,2'$};
\node at (67.5,27) {$3,4'$};
\node at (37.5,27) {$2,6'$};

\node at (52.5,22) {$1,4'$};
\node at (57.5,22) {$0,6'$};
\node at (62.5,22) {$6,1'$};
\node at (67.5,22) {$5,3'$};
\node at (37.5,22) {$4,5'$};
\node at (42.5,22) {$3,0'$};

\node at (57.5,17) {$2,5'$};
\node at (62.5,17) {$1,0'$};
\node at (67.5,17) {$0,2'$};
\node at (37.5,17) {$6,4'$};
\node at (42.5,17) {$5,6'$};
\node at (47.5,17) {$4,1'$};

\node at (62.5,12) {$3,6'$};
\node at (67.5,12) {$2,1'$};
\node at (37.5,12) {$1,3'$};
\node at (42.5,12) {$0,5'$};
\node at (47.5,12) {$6,0'$};
\node at (52.5,12) {$5,2'$};

\node at (67.5,7) {$4,0'$};
\node at (37.5,7) {$3,2'$};
\node at (42.5,7) {$2,4'$};
\node at (47.5,7) {$1,6'$};
\node at (52.5,7) {$0,1'$};
\node at (57.5,7) {$6,3'$};

\node at (37.5,2) {$5,1'$};
\node at (42.5,2) {$4,3'$};
\node at (47.5,2) {$3,5'$};
\node at (52.5,2) {$2,0'$};
\node at (57.5,2) {$1,2'$};
\node at (62.5,2) {$0,4'$};

\end{tikzpicture}

\end{center}
\caption{A Room frame of type $2^7$}
\label{type2^7}
\end{figure}

\end{example}

For values $u \in K_{\geq 5}$ that are divisible by four, we constructed orthogonal frame starters in $\zed_{2u} \setminus \{0,u\}$ to obtain the relevant Room frames. The remaining values $u \in K_{\geq 5}$, for which $u \equiv 2 \bmod 4$, were handled by orthogonal intransitive frame starters.

\subsubsection{The Case $t=4 $}

The next cases to consider are for $t = 4$. This is similar to the $t=2$ case but a bit easier, because a Room frame of type $4^4$ exists while a Room frame of type $2^4$ does not exist. It is possible to use another PBD result due to Hanani:

\begin{theorem}\cite[Lemma 5.10]{Han75}
Let 
\[K_{\geq 4}  = \{ 4,7, \dots, 12, 14, 15,18,19,23,27\}.\]
Then, for all $u \geq 4$, there is a $(u,K_{\geq 4})$-PBD.
\end{theorem}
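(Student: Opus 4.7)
The plan is to combine classical BIBD existence results with the machinery of PBD-closure. First I observe that if $v \in K_{\geq 4}$, then the trivial design with the single block $X$ is a $(v, K_{\geq 4})$-PBD, so the theorem holds automatically for every $v \in K_{\geq 4}$. For $u$ outside this finite set, the goal is to build PBDs whose blocks are drawn from $K_{\geq 4}$, and to establish that the set of admissible $u$ is closed under PBD-substitution, so that constructing a $(u, K')$-PBD for any $K' \subseteq K_{\geq 4}$ automatically gives a $(u, K_{\geq 4})$-PBD.

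The principal ingredient is Hanani's own existence results for BIBDs with small block size. A $(v, 4, 1)$-BIBD exists iff $v \equiv 1$ or $4 \pmod{12}$, and such a BIBD is already a $(v, K_{\geq 4})$-PBD, settling two residue classes modulo $12$ at once. Analogous results for block sizes $7, 8, 9 \in K_{\geq 4}$ handle additional residue classes directly. For $v$ in residue classes not covered by a single BIBD, I would start from a BIBD with block size outside $K_{\geq 4}$ (e.g., $5$ or $6$) and then replace each such block by an ingredient PBD whose blocks do lie in $K_{\geq 4}$; since sizes $5$ and $6$ cannot be so decomposed, one must ensure the ambient BIBD actually has its blocks come from a size already in $K_{\geq 4}$, or else pass to the recursive step below.

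The real engine is the family of recursive PBD constructions. From a transversal design $\mathrm{TD}(k, n)$, one truncates one group by deleting $a$ points to obtain a $(kn - a, \{k, k-1\})$-PBD, and by adjoining back the truncated group one obtains a $(kn - a, \{k, k-1, n - a\})$-PBD. Choosing $k$ so that $\{k, k-1\} \subseteq K_{\geq 4}$ (for example $k = 8$, with $k-1 = 7 \in K_{\geq 4}$) and appealing to the robust existence of $\mathrm{TD}(k, n)$ for sufficiently large $n$ absorbs the remaining large residues. Wilson's fundamental construction, which inflates a master GDD by compatible group divisible ingredients, then closes any residual gaps and supplies a uniform treatment for $u$ beyond a modest threshold.

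The main obstacle is the low-order regime $4 \leq u \leq 27$ together with a handful of intermediate values just above it. For each such $u$ that is neither already in $K_{\geq 4}$ nor directly reachable by the recursive machinery, one must exhibit an explicit $(u, K_{\geq 4})$-PBD by hand, typically by truncating an affine or projective plane of small order or by a small combinatorial search. The sporadic values $14, 15, 18, 19, 23, 27$ are present in $K_{\geq 4}$ precisely because they resist decomposition into smaller admissible blocks and so must be absorbed as block sizes themselves via the trivial one-block PBD. The hard part is not any single step but the bookkeeping: verifying that the recursive constructions together with the trivial PBDs at the sporadic values cover every $u \geq 4$ with no uncovered residues is what forces the exact list of sporadic entries defining $K_{\geq 4}$, and is the labor-intensive but conceptually routine core of the proof.
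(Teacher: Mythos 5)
First, note that the paper offers no proof of this statement: it is quoted directly from Hanani \cite[Lemma 5.10]{Han75} and used as a black box, so there is no in-paper argument to compare yours against. Your outline (one-block PBDs for $v \in K_{\geq 4}$, the BIBD existence theorems for small block sizes, truncated transversal designs $\mathrm{TD}(k,n)$, Wilson's fundamental construction, and a finite case analysis) is indeed the standard template by which such PBD basis results are established, and it is in the spirit of Hanani's and Wilson's work. But as written it is a plan rather than a proof: every step that carries actual content --- which transversal designs exist, which residues each construction covers, and the explicit small designs for the low-order regime --- is deferred, and that bookkeeping is precisely what constitutes the proof.

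More seriously, your argument cannot succeed exactly as stated, because the statement is false for $u=5$ and $u=6$ when $5,6 \notin K_{\geq 4}$. On a point set of size $5$ or $6$, every block has size at most $6$, hence size exactly $4$; but then the number of pairs through a fixed point ($4$ or $5$) would have to be divisible by $3$, which fails, so no $(5,K_{\geq 4})$- or $(6,K_{\geq 4})$-PBD exists. Your second paragraph brushes against this (``sizes $5$ and $6$ cannot be so decomposed'') but does not resolve it, so your concluding claim that the recursion plus the trivial one-block PBDs ``cover every $u \geq 4$'' is untenable: either $5$ and $6$ must be adjoined to the basis set, or those two values of $u$ must be excluded. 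This is consistent with the surrounding text of the paper, which constructs Room frames of types $4^5$ and $4^6$ by separate direct methods rather than by PBD-closure. A correct write-up should flag this discrepancy and then supply the missing case-by-case verification for the remaining small $u$.
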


It is required to find a small number of Room frames of type $4^u$. The following standard result will be useful in the subsequent discussion.

\begin{theorem}[Inflation by MOLS]
\label{inflation.thm}
If there exists a Room frame of type $t^u$ and $s \neq 2,6$ is a positive integer, then there is a Room frame of type $(st)^u$.
\end{theorem}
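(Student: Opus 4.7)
The plan is to construct the desired Room frame of type $(st)^u$ by \emph{cell inflation} of the given one, using a pair of mutually orthogonal Latin squares of order $s$. Write $F$ for the given Room frame of type $t^u$ on symbol set $S = S_1 \cup \cdots \cup S_u$, and fix a total ordering $<$ on $S$ (used only to break the symmetry of unordered pairs). By the Bose--Shrikhande--Parker theorem, the hypothesis $s \neq 2, 6$ guarantees a pair $L_1, L_2$ of MOLS of order $s$ with entries in $[s] = \{1,\ldots,s\}$. I would then form an array $F'$ of side $stu$ with rows and columns indexed by $S \times [s]$, with groups declared to be $S_i \times [s]$ for $1 \leq i \leq u$, and fill it as follows: for each non-empty cell $F(r,c) = \{x,y\}$ with $x < y$, place the pair $\{(x, L_1(i,j)),(y, L_2(i,j))\}$ in cell $((r,i),(c,j))$ of $F'$ for every $i,j \in [s]$; all cells inflating an empty cell of $F$ remain empty.

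Next I would verify the five defining properties of a Room frame for $F'$. The hole property is automatic, since the holes of $F'$ are the $s \times s$ inflations of the holes of $F$. For the row property, fix a row $(r,i)$ with $r \in S_a$: each non-empty cell $(r,c)$ of $F$ (necessarily with $c \notin S_a$) containing a pair $\{x,y\}$ contributes in row $(r,i)$ of $F'$ the symbols $(x, L_1(i,j))$ and $(y, L_2(i,j))$ as $j$ ranges over $[s]$, and since $L_1(i,\cdot)$ and $L_2(i,\cdot)$ are permutations of $[s]$, these are exactly $\{x\} \times [s]$ and $\{y\} \times [s]$, each element occurring once. Summing over the non-empty cells in row $r$ of $F$ and invoking the row property of $F$ yields each symbol of $(S \setminus S_a) \times [s]$ exactly once in row $(r,i)$ of $F'$; columns are handled identically. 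For the pair-covering property, any cross-group pair $\{(x,k),(y,l)\}$ with $x < y$ determines the unique cell $F(r,c) = \{x,y\}$ of $F$, and then the orthogonality of $L_1, L_2$ gives the unique $(i,j) \in [s]^2$ with $L_1(i,j) = k$ and $L_2(i,j) = l$; this is precisely the unique cell of $F'$ containing the pair.

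The main obstacle, such as it is, is really just getting the MOLS: the whole construction rests on the existence of $2$ MOLS of order $s$, which is exactly why the hypothesis excludes $s \in \{2,6\}$. Once those are in hand the remainder is routine bookkeeping, with the only mildly delicate point being that the unordered pair $\{x,y\}$ must be consistently broken into a first and second coordinate before being coupled to $L_1$ versus $L_2$; fixing the order $<$ on $S$ is the cleanest way to handle this and ensures that the row, column, and pair conditions align without case analysis.
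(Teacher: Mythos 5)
Your proposal is correct and is essentially the same construction the paper sketches: give each point weight $s$ and replace every filled cell by an $s\times s$ block built from a pair of MOLS of order $s$, with the verification of the row, column, and pair properties following from the Latin and orthogonality properties exactly as you describe. The paper only states this idea in one sentence, so your write-up is simply a fuller version of the intended argument.
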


The idea is to take $s$ copies of each point and replace each filled cell of the Room frame of type by an appropriate pair of orthogonal latin squares of side $s$. 

Let us return to our analysis of Room frames of type $4^u$. For $u$ odd, $u \geq 7$, we can start with a Room square of side $u$ (i.e., a Room frame of type $1^u$) and apply Theorem \ref{inflation.thm} (inflation by MOLS) with $s=4$. 
For $u=4$, see Example \ref{z4z4.exam}.  The case $u=5$ is a special case of a finite field starter-based construction from \cite{DS80}. For $u = 8,10,12,14$ and $18$, strong frame starters in cyclic groups yield the desired Room frames (see \cite{DS81}). The last case is $u=6$, which was handled in \cite{DS81} using orthogonal intransitve frame starters.

\subsubsection{General Values of $t$} 

Given the existence results for $t = 1$, $2$ and $4$ that are discussed above, we can handle most other values of $t$ by using Theorem \ref{inflation.thm} (inflation by MOLS). Starting with Room frames of type $2^u$ (for $u \geq 5$) and $4^u$ (for $u \geq 5$), we immediately get Room frames of type $t^u$ for all even $t$ and all $u \geq 5$. Similarly, starting with Room frames of type $1^u$  (for $u$ odd, $u \geq 7$), we obtain Room frames of type $t^u$ for all odd $t$ and all odd $u$, $u \geq 7$.

The remaining cases are Room frames of type $t^4$ (for even $t$) and $t^5$ (for odd $t$).  
As of 1981, Room frames of types $4^4$ and $8^4$ had been constructed using orthogonal frame starters. In conjunction with Theorem \ref{inflation.thm}, this showed that Room frames of type $t^4$ exist for all $t \equiv 0 \bmod 4$ (see \cite{DS81}). 

Also, by 1981, Room frames of types $2^5$, $3^5$, $5^5$ and $7^5$ had been constructed using orthogonal frame starters. Using Theorem \ref{inflation.thm}, this showed that Room frames of type $t^5$ exist  for all $t$ such that $\gcd(t, 210) > 1$ (see \cite{DS81}).

It was several years until these results were improved.
However, by the early 1990's, Room frames of types $6^4$ and $10^4$ were constructed  (see \cite{DS93}) using the hill-climbing algorithm described in \cite{DS87}, suitably modified to construct Room frames. Using Theorem \ref{inflation.thm}, it followed that Room frames of type $t^4$ exist for all $t$ divisible by $4$, $6$ or $10$. So all the remaining unknown cases for type $t^4$ had $t \equiv 2,10 \bmod 12$. 

A major advance was due to Ge and Zhu \cite{GZ93} in 1993, who utilized a sophisticated construction based on \emph{incomplete Room frames}. Their paper \cite{GZ93} solved all but a few cases of types $t^4$  and $t^5$, as stated in the following theorem.

\begin{theorem}
\cite{GZ93}
There exists a Room frame of type $t^4$ for all even $t \geq 4$, except possibly for 
$ t \in \{ 14,22,26,34,38,46,62,74,82,86,98,122,134,146\}$. Also, there exists a Room frame of type $t^5$ for all  $t \geq 5$, except possibly for $ t = 11, 13, 17$ or $19$.
\end{theorem}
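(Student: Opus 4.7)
The plan is to layer a recursive construction on top of a short list of small direct examples, following the Ge--Zhu strategy based on \emph{incomplete Room frames} (IRFs). By the reductions already established in the preceding subsection, type $t^4$ is known whenever $t$ is divisible by $4$, $6$, or $10$, so only the values $t \equiv 2 \pmod 4$ with $\gcd(t,15) = 1$ remain; for type $t^5$, the inflation argument using $2^5$, $3^5$, $5^5$, $7^5$ together with Theorem~\ref{inflation.thm} leaves only $t$ coprime to $210$ uncovered. The goal is to close out these residual classes.

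The workhorse is the incomplete Room frame: an IRF of type $(t^u; ts)$ is a $(tu+ts) \times (tu+ts)$ array that behaves like a Room frame of type $t^{u+s}$ except that the $ts \times ts$ subarray indexed by the distinguished ``extra'' hole is empty, and the row/column conditions at indices in that hole are imposed only with respect to outside symbols. The \textbf{filling lemma} is then immediate: plugging a Room frame of type $t^s$ into the distinguished hole of an IRF of type $(t^u; ts)$ produces a Room frame of type $t^{u+s}$. The recursive engine is obtained by combining this lemma with frame-theoretic analogues of Wilson's fundamental construction: from a transversal design $\mathrm{TD}(k,m)$ with suitable $k$, together with Room frames on the blocks and a small frame to complete the hole, one manufactures new Room frames of type $t^4$ or $t^5$ for an arithmetic progression of $t$.

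Specialising to $u=4$ and $u=5$ reduces the problem to producing a small number of IRF ingredients at the low end of each target residue class. These are constructed directly via intransitive frame starters in abelian groups of the form $\zed_g$ or $\zed_g \times \zed_h$ with two prescribed subgroups (one corresponding to the ordinary hole, one to the distinguished hole). Each successful ingredient, through the filling lemma combined with Theorem~\ref{inflation.thm}, unlocks an arithmetic progression of new values of $t$; the union of these progressions covers every $t$ above a moderate threshold within the target residues. A final bookkeeping pass tabulates the values of $t$ below the threshold that remain unreached, giving the fourteen possible exceptions for $t^4$ and the four possible exceptions $\{11, 13, 17, 19\}$ for $t^5$.

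The main obstacle is not the recursion itself, which is largely mechanical once set up, but the construction of the IRF ingredients at the low end of each residue class. Solving a starter--adder system in a group with two prescribed subgroups is a hard combinatorial search, and for small $t$ whose group structure is impoverished (for example $t = 14, 22, 26$) neither elegant algebraic constructions nor the computer searches available to Ge and Zhu in 1993 produced a viable ingredient --- this is precisely why those values persist as ``possibly exceptional''. Eliminating any of them requires either a new group-theoretic construction or substantially more computational effort, and closing these gaps was left to the subsequent papers building on \cite{GZ93}.
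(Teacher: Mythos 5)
First, a caveat about the comparison itself: the paper does not prove this theorem. It is quoted from \cite{GZ93} with only a one-sentence description of the method (``a sophisticated construction based on incomplete Room frames''), so your proposal can only be judged as a reconstruction of the Ge--Zhu argument. At that level your opening reductions are consistent with the paper's narrative (for $t^4$ the open cases are $t\equiv 2 \pmod 4$ with $\gcd(t,15)=1$; for $t^5$ they are $t$ coprime to $210$), and you correctly identify incomplete Room frames as the engine.

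However, there is a genuine structural gap in the machinery you set up. Your incomplete Room frame of type $(t^u;ts)$ and its filling lemma produce a Room frame of type $t^{u+s}$: they \emph{increase the number of holes} while keeping the hole size fixed at $t$. That is exactly the wrong direction for this theorem, where $u$ is pinned at $4$ or $5$ and it is the hole size $t$ that must grow. As literally stated, your lemma can never output a type $t^4$ or $t^5$ frame for a new value of $t$ (taking $u+s=4$ would force filling with a nonexistent frame of type $t^1$ or $t^2$ on too few holes). The incomplete frames Ge and Zhu actually use are of the shape $(t;s)^u$ --- $u$ holes of size $t$, each containing a distinguished subhole of size $s$, with the $su\times su$ subarray indexed by the union of the subholes left empty --- and the filling lemma inserts a Room frame of type $s^u$ into that missing subarray to obtain type $t^u$ with $u$ unchanged. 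Beyond this, the proposal defers everything that constitutes the actual content of the theorem: no IRF ingredient is exhibited, the recursive constructions are not stated precisely enough to check which arithmetic progressions of $t$ they reach, and the two exception sets are asserted rather than derived from the bookkeeping. A correct proof here is unavoidably a large table of explicit starter--adder data plus a covering verification, none of which is present.
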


The four possible exceptions of type $t^5$ were handled in a 1993 paper by Dinitz and Lamken \cite{DL93}, who constructed these Room frames using orthogonal frame starters. Then, in 1994, all but one of the possible exceptions of type $t^4$ were constructed by Dinitz, Stinson and Zhu \cite{DSZ94}. This was accomplished by a recursive construction that made use of starters having \emph{complete ordered transversals}. The single remaining exception was a Room frame of type $14^4$, which was finally constructed by Dinitz and Warrington \cite{DW10} in 2010. This ``last'' Room frame was constructed using the hill-climbing algorithm from \cite{DS92a}; it required over 5,000,000 trials before it completed successfully. 

An observant reader will note that Room frames of types $6^4$, $10^4$ and $14^4$ were constructed using hill-climbing. This is because is impossible to construct these Room frames from orthogonal frame starters 
(see \cite{St22}).



\subsection{Some Historical Remarks}

The first Room frame can be found in the 1974 paper published by Wallis \cite{Wa74}. This was a Room frame  of type $2^5$, which I presented in Figure \ref{type2^5}.
Other early examples of Room frames in the literature include  a  Room frame of type $8^4$ (Wallis, \cite{Wa76}), and a skew Room frame of type $2^5$ (Beaman and Wallis, \cite{BW76}). These papers appeared in 1976 and 1977, respectively.

These papers from the 1970's did not define Room frames  as a specific combinatorial structure. 
For example, the ``original'' Room frame, of type $2^5$, was simply a component used in a construction that created a Room square of side $5(v-w)+w$ from a Room square of side $v$ containing a Room subsquare of side $w$. This construction was used to obtain a Room square of side $257$, which completed the solution of the Room square existence problem (we provide a few more details below). 

The term ``frame'' was apparently first coined in the 1981 paper by Mullin, Schellenberg, Vanstone and Wallis \cite{MSVW81}. This paper refers to the 1972 survey by Wallis \cite{Wa72} as the place where Room frames were introduced. However, although \cite[Chapter IV]{Wa72} discusses a construction that is termed ``the frame construction,'' there does not seem to be any actual use of Room frames (as we now understand the term) there. 

It is  clear that the paper   \cite{MSVW81} was the first one where Room frames were studied as objects of interest in their own right. (However, I should point out that the Room frames studied in  \cite{MSVW81} were in fact skew Room frames.) The main result proven in  \cite{MSVW81} was that a skew Room frame of type $2^n$ exists for all positive integers $n \equiv 1 \bmod 4$, $n \neq 33,57,93,133$. Skew Room frames of types $2^n$ were known to exist for $n = 5,9,13,17$ and the set $\{ n : \text{there exists a skew Room frame of type $2^n$}\}$ is PBD-closed. Constructions of PBDs with blocks of sizes $5,9,13$ and $17$ given in \cite{MSVW81} completed the proof.

I became aware of the paper \cite{MSVW81} in 1978 when Ron Mullin gave me a preprint version of the paper (the was no arXiv in those days!). I found the idea of Room frames quite fascinating and they became an important technique in my research tool chest for many years. Room frames were in fact a central theme in my PhD thesis \cite{St-Phd} and I subsequently published a number of papers focussed on Room frames and their applications starting in the early 1980's. 




I would like to comment briefly on direct product constructions, which have had a long history in combinatorial designs. A singular direct product construction was in fact the original motivation for Room frames (see Wallis \cite{Wa74}). In the case of Room squares, a \emph{direct product} construction creates a Room square of side $uv$ from Room squares of sides $u$ and $v$. A \emph{singular direct product} creates a Room square of side $u(v-w) + w$ from a Room square of side $u$ and a Room square of side $v$ that contains a Room subsquare of side $w > 0$.
The singular direct product requires that there exists a pair of orthogonal latin squares of order $v-w$, which rules out the ordered pair $(v,w) = (7,1)$, since a pair of orthogonal latin squares of order six does not exist. The resulting Room square of side $u(v-w) + w$  contains Room subsquares of sides $u, v$ and $w$. 

Since a Room square of side five does not exist, we cannot take $u=5$ in the singular direct product.
However, the existence of a Room frame of type $2^5$ provides a clever way to circumvent this restriction. 
Given a Room square of side $v$ that contains a Room subsquare of side $w > 0$, a singular direct product uses the Room frame of type $2^5$ to create a Room square of side $5(v-w)+w$, provided that $v - w \neq 12$. (The reason for the restriction $v - w \neq 12$ is that this variation of the singular direct product requires a pair of orthogonal latin squares of order $(v-w)/2$.)

The previously mentioned construction of a Room square of side 257 (from \cite{Wa74}) uses two applications of the singular direct product:
\begin{eqnarray*}
57 & = & 7(9-1) + 1\\
257 &=& 5(57-7)+7.
\end{eqnarray*}
The first equation leads to a Room square of side 57 that contains a Room subsquare of side seven. 
This is then used, in the ``Room frame'' variation of the singular direct product, in the second equation.

\section{Kirkman Frames}

\subsection{Motivation: From Room frames to Kirkman Frames}

A couple of years after getting my PhD---probably around 1983---I started thinking about generalizations of Room frames. The obvious place to begin was to look at block size three rather than block size two. At the same time, it seemed simplest to start with a single resolution rather than the orthogonal resolutions that exist in Room squares and Room frames. So this led to the definition of a Kirkman frame as a ``Kirkman triple system with holes'' that I gave in my paper \cite{St87}, which was published in 1987.

There was a long delay in the publication of this paper, as it was rejected by at least three journals before it was accepted by \emph{Discrete Mathematics}. Ironically, it has turned out be one of my most highly cited papers in design theory. The following quote is from the 2003 survey by Rees and Wallis \cite[p.\ 332]{RW03}:
\begin{quote}
\emph{``Since their introduction by Stinson, however, Kirkman Frames
have proven to be the single most valuable tool for the construction of
the various generalizations of KTSs that will be discussed in this survey.''}
\end{quote}
I should also mention the book by Furino, Miao and Yin \cite{FMY96}, which is devoted to the topic of Kirkman frames, as evidence of the importance of this topic.

In a Room frame, a hole of size $t$ intersects $t$ rows and $t$ columns of the array. This is often stated as part of the definition. In the case of a Kirkman frame, I decided to simply require that the set of blocks could be partitioned into \emph{holey parallel classes}, where each holey parallel class forms a partition of all the points not in some hole. It then can be proven using a simple counting argument that a hole of size $t$ is associated with exactly $t/2$ holey parallel classes. This of course implies that every hole has even size. 

It is easy to see that a Kirkman triple system of order $v$ is equivalent to a Kirkman frame of type $2^{v/2}$. 
So the smallest Kirkman frame that is not equivalent to a Kirkman triple system is the Kirkman frame of type $4^4$. Example \ref{type4^4} provides a construction of this Kirkman frame.

\begin{example}
\label{type4^4}
{\rm I construct a Kirkman frame of type $4^4$ using the technique that I described in \cite{St86}. Here is a pair of incomplete orthogonal latin squares of order six with a hole of size two, which were discovered by Euler in the eighteenth century:

\medskip

\[
\begin{array}{|x{.3cm}|x{.3cm}|x{.3cm}|x{.3cm}|x{.3cm}|x{.3cm}|}
\hline 
$5$ & $6$ & $3$ & $4$ & $1$ & $2$\\ \hline 
$2$ & $1$ & $6$ & $5$ & $3$ & $4$\\ \hline 
$6$ & $5$ & $1$ & $2$ & $4$ & $3$\\ \hline 
$4$ & $3$ & $5$ & $6$ & $2$ & $1$\\ \hline 
$1$ & $4$ & $2$ & $3$ &   &  \\ \hline 
$3$ & $2$ & $4$ & $1$ &   &  \\ \hline 
\end{array}
\hspace{1in}
\begin{array}{|x{.3cm}|x{.3cm}|x{.3cm}|x{.3cm}|x{.3cm}|x{.3cm}|}
\hline 
$a$ & $b$ & $e$ & $f$ & $c$ & $d$\\ \hline 
$f$ & $e$ & $a$ & $b$ & $d$ & $c$\\ \hline 
$d$ & $c$ & $f$ & $e$ & $a$ & $b$\\ \hline 
$e$ & $f$ & $d$ & $c$ & $b$ & $a$\\ \hline 
$b$ & $d$ & $c$ & $a$ &   &  \\ \hline 
$c$ & $a$ & $b$ & $d$ &   &  \\ \hline 
\end{array}
\]

\medskip
 
 From these incomplete orthogonal latin squares, construct an incomplete group-divisible design. I label the rows and columns and obtain a block of size four from each of the $32$ filled cells:
 \[
 \begin{array}{llll}
 \{r_1,c_1,5,a\} & \{r_1,c_2,6,b\} &  \{r_1,c_3,3,e\} & \{r_1,c_4,4,f\} 
\\  \{r_1,c_5,1,c\} & \{r_1,c_6,2,d\}\\
 \{r_2,c_1,2,f\} & \{r_2,c_2,1,e\} &  \{r_2,c_3,6,a\} & \{r_2,c_4,5,b\} 
\\  \{r_2,c_5,3,d\} & \{r_2,c_6,4,c\}\\
 \{r_3,c_1,6,d\} & \{r_3,c_2,5,c\} &  \{r_3,c_3,1,f\} & \{r_3,c_4,2,e\} 
\\  \{r_3,c_5,4,a\} & \{r_3,c_6,3,b\}\\
 \{r_4,c_1,4,e\} & \{r_4,c_2,3,f\} &  \{r_4,c_3,5,d\} & \{r_4,c_4,6,c\} 
\\  \{r_4,c_5,2,b\} & \{r_4,c_6,1,a\}\\
 \{r_5,c_1,1,b\} & \{r_5,c_2,4,d\} &  \{r_5,c_3,2,c\} & \{r_5,c_4,3,a\} \\
 \{r_6,c_1,3,c\} & \{r_6,c_2,2,a\} &  \{r_6,c_3,4,b\} & \{r_6,c_4,1,d\} 
\end{array}
 \]
 Every block contains exactly one point from $\{ 5,6,e,f,r_5,r_6,c_5,c_6\}$.
 Then delete these eight points, obtaining $32$ blocks of size three. Each deleted point gives rise to a holey parallel class. The resulting Kirkman frame has holes $\{1,2,3,4\}$, $\{a,b,c,d\}$, $\{ r_1,r_2,r_3,r_4\}$
 and $\{ c_1,c_2,c_3,c_4\}$. The blocks, arranged into eight holey parallel classes, are as follows:
 \[
 \begin{array}{l|l}
 \multicolumn{2}{c}{\{1,2,3,4\}}   \\ \hline
\{r_1,c_1,a\} & \{r_1,c_2,b\}   \\
\{r_2,c_4,b\} & \{r_2,c_3,a\}   \\
\{r_3,c_2,c\} & \{r_3,c_1,d\}    \\
\{r_4,c_3,d\} & \{r_4,c_4,c\}    \\
 \end{array}
 \hspace{1in}
\begin{array}{l|l}
 \multicolumn{2}{c}{\{a,b,c,d\}} \\ \hline
  \{r_1,c_3,3\} & \{r_1,c_4,4\}  \\
  \{r_2,c_2,1\} & \{r_2,c_1,2\}  \\
  \{r_3,c_4,2\} & \{r_3,c_3,1\}  \\
  \{r_4,c_1,4\} & \{r_4,c_2,3\}  \\
 \end{array}
\]
 
\[
 \begin{array}{l|l}
 \multicolumn{2}{c}{\{ r_1,r_2,r_3,r_4\}}   \\ \hline
\{c_1,1,b\} &  \{c_1,3,c\} \\
 \{c_2,4,d\} &  \{c_2,2,a\}  \\ 
  \{c_3,2,c\} &   \{c_3,4,b\} \\ 
\{c_4,3,a\}  &  \{c_4,1,d\} \\
 \end{array}
 \hspace{1in}
\begin{array}{l|l}
 \multicolumn{2}{c}{\{ c_1,c_2,c_3,c_4\}} \\ \hline
 \{r_1,1,c\} & \{r_1,2,d\}  \\
 \{r_2,3,d\} & \{r_2,4,c\}  \\ 
 \{r_3,4,a\} & \{r_3,3,b\}  \\ 
  \{r_4,2,b\} & \{r_4,1,a\} \\
   \end{array}
\]
}
\hfill$\blacksquare$
\end{example}

In  \cite{St87}, I proved that there is a Kirkman frame of type $t^u$ if and only if $t$ is even and $t(u-1) \equiv 0 \bmod 3$. I will review the main steps in the proof. The proof used PBDs and group-divisible designs (GDDs) along with a few small Kirkman frames. One useful recursive tool is the 
``GDD Construction'' from \cite{St87}. This is just a standard Wilson-type GDD construction (see, e.g., \cite{Wi75}), adapted to the setting of Kirkman frames. 

I recall a special case of the GDD Construction that is sufficient for our needs, but first I define 
group-divisible designs (GDDs). A \emph{group-divisible design} is a triple
$(X,\GG,\AAA)$, where $X$ is a finite set of \emph{points}, $\GG$ is a partition of $X$ into subsets called \emph{groups\footnote{Of course these are \emph{not} algebraic groups.}}, and $\AAA$ is a set of subsets of $X$, with the property that every pair of points is contained in a unique group, or a unique block, but not both. Assume that $|A| > 1$ for every $A \in \AAA$. 
Observe that a PBD is equivalent to a GDD in which every group has size $1$.


\begin{theorem}
[GDD construction]
\label{GDD.const}
Let $(X, \GG , \AAA)$ be a GDD in which $|X| = gu$ and there are $u$ groups of size $g$, and let $w\geq 1$ ($w$ is often called a \emph{weight}). For each block $A \in \AAA$, suppose there is a Kirkman frame of type $w^{|A|}$. Then there is a Kirkman frame of type $(gw)^u$.
\end{theorem}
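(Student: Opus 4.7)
The plan is to apply a Wilson-style weighting construction. Note first that $w$ must be even, since every hole of a Kirkman frame has even size. Let $W = \{1,\dots,w\}$ and put $Y = X \times W$; designate $H_i = G_i \times W$ for $1 \le i \le u$ as the prospective holes of the target frame. Each $H_i$ has size $gw$, so $Y$ carries a partition of type $(gw)^u$. For each block $A \in \AAA$, I would inflate by placing a copy of the hypothesized Kirkman frame of type $w^{|A|}$ on the point set $A \times W$, with its holes identified as $\{x\} \times W$ for $x \in A$, and take $\BB$ to be the union of all blocks produced in this way.

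Pair coverage then follows quickly. A pair $\{(x,i),(y,j)\}$ avoiding every $H_k$ must have $x$ and $y$ in distinct groups of $\GG$, so $\{x,y\}$ lies in a unique block $A \in \AAA$; inside the small frame on $A \times W$ the two points occupy distinct sub-holes and are covered by exactly one block of $\BB$. Pairs lying inside a single $H_k$ project to a common group of $\GG$ and are never covered, as required.

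The main work, and what I expect to be the central obstacle, is producing the holey parallel classes. Fix a hole $H_k$, and for each $x \in G_k$ and each $A \in \AAA$ with $A \ni x$, label the $w/2$ holey parallel classes of the small frame on $A \times W$ that miss $\{x\}\times W$ as $\mathcal{P}_{A,x,1},\dots,\mathcal{P}_{A,x,w/2}$. For each $x \in G_k$ and each $\ell$, set
\[
\mathcal{Q}_{k,x,\ell} \;=\; \bigcup_{A \in \AAA,\; A \ni x} \mathcal{P}_{A,x,\ell}.
\]
The key GDD observation is that because every pair $\{x,y\}$ with $y \in X \setminus G_k$ lies in a unique block of $\AAA$ through $x$, the family $\{A \setminus \{x\} : A \ni x\}$ partitions $X \setminus G_k$. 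Since each $\mathcal{P}_{A,x,\ell}$ partitions $(A \setminus \{x\}) \times W$, the union $\mathcal{Q}_{k,x,\ell}$ partitions $(X \setminus G_k) \times W = Y \setminus H_k$. Hence each $\mathcal{Q}_{k,x,\ell}$ is a holey parallel class of the target frame missing $H_k$, and letting $x$ range over $G_k$ and $\ell$ over $\{1,\dots,w/2\}$ produces $g \cdot (w/2) = gw/2$ of them, which is the correct count.

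Finally I would check that every $B \in \BB$ sits in exactly one $\mathcal{Q}_{k,x,\ell}$. Every such $B$ belongs to a unique small frame on some $A \times W$, and within it to a unique $\mathcal{P}_{A,x,\ell}$ (indexed by the sub-hole that its small holey parallel class misses). Because $x$ lies in a unique group $G_k$ of $\GG$, the index $k$ is determined, so $B$ lies in exactly one big class $\mathcal{Q}_{k,x,\ell}$. This assembles $(Y, \{H_1,\dots,H_u\}, \BB)$ into a Kirkman frame of type $(gw)^u$; the only delicate point is the bookkeeping just described, but once the sub-frames' holey parallel classes are fixedly labeled it is routine.
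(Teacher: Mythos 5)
Your proof is correct and follows exactly the approach the paper intends: the paper gives no detailed proof of Theorem~\ref{GDD.const} (it cites it as ``a standard Wilson-type GDD construction''), but the sketch it does provide for the $w=2$ PBD special case in Section~3.2 --- weight each point, place a small Kirkman frame on $A\times W$ for each block $A$, and assemble the holey parallel classes missing $G_k\times W$ by taking unions over the blocks through each $x\in G_k$ --- is precisely your argument. Your verification that $\{A\setminus\{x\}: A\ni x\}$ partitions $X\setminus G_k$, and the resulting count of $gw/2$ classes per hole, supplies the bookkeeping the paper leaves to the reader.
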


\begin{remark}
If $|G| = 1$ for all $G \in \GG$ (i.e., the GDD is a PBD), then we obtain a PBD-closure result. More precisely, the set 
\[\{u : \text{there exists a Kirkman frame of type } t^u \}\] is PBD-closed for any fixed $t$.
\end{remark}

Now I discuss the existence proof for uniform Kirkman frames. When $t=2$, it follows that $u \equiv 1 \bmod 3$. Thus all the Kirkman frames in the case $t=2$ can immediately be obtained from Kirkman triple systems. 

The next case, $t=4$, is easily handled as follows. Again, $u \equiv 1 \bmod 3$ is a necessary condition. I  presented a Kirkman frame of type $4^4$ in Example \ref{type4^4}. When $u \equiv 1 \bmod 3$, $u \geq 7$, it suffices to apply Theorem \ref{GDD.const} as follows. Start with a group-divisible design having $u$ groups of size two and blocks of size four, and define $w=2$. Every block is replaced by a Kirkman frame of type $2^4$. The required GDDs are constructed in 
\cite{BHS77}.

The case $t=6$ is only a bit more difficult. Here there are no congruential conditions on $u$. I split the proof into two subcases, namely $u \equiv 0,1 \bmod 4$ and $u \equiv 2,3 \bmod 4$.  

When $u \equiv 0,1 \bmod 4$, start with a $(3u+1,\{4\})$-PBD (i.e., a $(3u+1,4,1)$-BIBD). Delete a point, creating a GDD with $u$ groups of size three and blocks of size four. Define $w=2$ and apply Theorem \ref{GDD.const}, filling in Kirkman frames of type $2^4$. The result is a Kirkman frame of type $6^u$.

For $u \equiv 2,3 \bmod 4$, $u \geq 7$, start with a $(3u+1,\{4,7\})$-PBD that contains a unique block of size seven (see \cite{Br79}). Delete a point that is not in the block of size seven, creating a GDD with $u$ groups of size three and blocks of size four and seven. Define $w=2$ and apply Theorem \ref{GDD.const}, filling in Kirkman frames of type $2^4$ and $2^7$. The result is a again Kirkman frame of type $6^u$.

It remains to construct a Kirkman frame of type $6^6$. A direct construction for this Kirkman frame can be found in \cite{RS92}.

Having handled the cases $t = 2,4$ and $6$, all other cases follow by ``Inflation by MOLS'' (Theorem \ref{inflation.thm}), which also works for Kirkman frames. The reader can fill in the details.




\subsection{Some Historical Remarks}
\label{hist.sec}

Kirkman frames can be used in existence proofs for Kirkman triple systems. In particular, several of the constructions for Kirkman triple systems given by Ray-Chaudhuri and Wilson in \cite{RW71}  (see also \cite{HRW72}) can be recast as Kirkman frame-based constructions. For example, \cite[Theorem 1]{RW71} proves that the set 
\[R_3^* = \{ r: \text{there exists a Kirkman triple system of order } 2r+1\}\] is PBD-closed.
That is, if there exists a $(v,K)$-PBD where $K \subseteq R_3^*$, then $v \in R_3^*$.\footnote{Note that this result is a special case of Theorem \ref{GDD.const}.}

I sketch the proof of this fundamental result using Kirkman frame language, making use of the fact that a Kirkman frame of type $2^r$ is equivalent to a Kirkman triple system of order $2r+1$. Basically, it suffices to delete a point from the Kirkman triple system to obtain the desired Kirkman frame (see Example 
\ref{KTS-frame.exam}). Then apply Theorem \ref{GDD.const} with $w=2$. (In more detail, let $(X, \BB)$ be a $(v,K)$-PBD where $K \subseteq R_3^*$. Give every point in $X$ weight two. For every block $B \in \BB$, construct a Kirkman frame of type $2^{|B|}$ on the points $B \times \{1,2\}$, where the holes are
$\{x\} \times \{1,2\}$, $x \in B$. The result is a Kirkman frame of type $2^v$, which is equivalent to a 
Kirkman triple system of order $2v+1$.)

Interestingly, the original proof of this result, given in \cite{RW71}, is slightly different. Rather than utilizing Kirkman frames (implicitly or explicitly), the proof instead uses the notion of a \emph{completed resolvable design}. The idea is to add a new point to the blocks of each parallel class of a given Kirkman triple system of order $2r+1$. Then a new block is formed, consisting of the $r$ new points. The result is a 
$(3r+1, \{4,r\})$-PBD (and this PBD contains a unique block of size $r$ if $r > 4$). Clearly this  process can be reversed; see Example \ref{KTS-frame.exam}. The PBD-closure proof given 
in \cite{RW71} starts with a PBD and gives every point weight 3. Every block is replaced by a suitable completed resolvable design. At the end, a completed resolvable design is constructed; this design is equivalent to a Kirkman triple system of order $2v+1$. 

The following example illustrates a KTS, a Kirkman frame and a completed resolvable design, all of which are equivalent structures.

\begin{example}
\label{KTS-frame.exam} 
{\rm I present a Kirkman triple system of order 15 that was originally discovered by Cayley in 1850 (see \cite[Example 19.7]{CR99}). This Kirkman triple system has the following 35 blocks, where each row of five blocks is a parallel class: 
\[
\begin{array}{l@{\hspace{.2in}}l@{\hspace{.2in}}l@{\hspace{.2in}}l@{\hspace{.2in}}l}
\mathit{abc} & \mathit{d}35 & \mathit{e}17 & \mathit{f}28 & \mathit{g}46\\
\mathit{ade} & \mathit{b}26 & \mathit{c}48 & \mathit{f}15 & \mathit{g}37\\
\mathit{afg} & \mathit{b}13 & \mathit{c}57 & \mathit{d}68 & \mathit{e}24\\
\mathit{bdf} & \mathit{a}47 & \mathit{c}16 & \mathit{e}38 & \mathit{g}25\\
\mathit{beg} & \mathit{a}58 & \mathit{c}23 & \mathit{d}14 & \mathit{f}67\\
\mathit{cdg} & \mathit{a}12 & \mathit{b}78 & \mathit{e}56 & \mathit{f}34\\
\mathit{cef} & \mathit{a}36 & \mathit{b}45 & \mathit{d}27 & \mathit{g}18
\end{array}
\]
To construct the associated Kirkman frame of type $2^7$, delete a point. The holes of the Kirkman frame are formed by the blocks containing the given point. Suppose the point $\mathit{a}$ is deleted. Then we obtain the following Kirkman frame:
\[
\begin{array}{c|l@{\hspace{.2in}}l@{\hspace{.2in}}l@{\hspace{.2in}}l}
\text{hole} & \multicolumn{4}{|c}{\text{holey parallel class}}\\ \hline 
\mathit{bc} & \mathit{d}35 & \mathit{e}17 & \mathit{f}28 & \mathit{g}46\\
\mathit{de} & \mathit{b}26 & \mathit{c}48 & \mathit{f}15 & \mathit{g}37\\
\mathit{fg} & \mathit{b}13 & \mathit{c}57 & \mathit{d}68 & \mathit{e}24\\
47 &\mathit{bdf} &  \mathit{c}16 & \mathit{e}38 & \mathit{g}25\\
58 & \mathit{beg} &  \mathit{c}23 & \mathit{d}14 & \mathit{f}67\\
12 & \mathit{cdg} &  \mathit{b}78 & \mathit{e}56 & \mathit{f}34\\
36 & \mathit{cef} &  \mathit{b}45 & \mathit{d}27 & \mathit{g}18
\end{array}
\]
Notice that there are seven holes of size two, and seven holey parallel classes.

Finally, I construct the associated completed resolvable design. For each of the seven parallel classes, add a new point to all the blocks in that parallel class. Then create a new block consisting of the seven new points.
\[
\begin{array}{l@{\hspace{.2in}}l@{\hspace{.2in}}l@{\hspace{.2in}}l@{\hspace{.2in}}l}
\infty_1\mathit{abc} & \infty_1\mathit{d}35 & \infty_1\mathit{e}17 & \infty_1\mathit{f}28 & \infty_1\mathit{g}46\\
\infty_2\mathit{ade} & \infty_2\mathit{b}26 & \infty_2\mathit{c}48 & \infty_2\mathit{f}15 & \infty_2\mathit{g}37\\
\infty_3\mathit{afg} & \infty_3\mathit{b}13 & \infty_3\mathit{c}57 & \infty_3\mathit{d}68 & \infty_3\mathit{e}24\\
\infty_4\mathit{bdf} & \infty_4\mathit{a}47 & \infty_4\mathit{c}16 & \infty_4\mathit{e}38 & \infty_4\mathit{g}25\\
\infty_5\mathit{beg} & \infty_5\mathit{a}58 & \infty_5\mathit{c}23 & \infty_5\mathit{d}14 & \infty_5\mathit{f}67\\
\infty_6\mathit{cdg} & \infty_6\mathit{a}12 & \infty_6\mathit{b}78 & \infty_6\mathit{e}56 & \infty_6\mathit{f}34\\
\infty_7\mathit{cef} & \infty_7\mathit{a}36 & \infty_7\mathit{b}45 & \infty_7\mathit{d}27 & \infty_7\mathit{g}18\\
\multicolumn{5}{l}{\infty_1\infty_2\infty_3\infty_4\infty_5\infty_6\infty_7} 
\end{array}
\]
}
\hfill$\blacksquare$
\end{example}

On the other hand, as Assaf and Hartman pointed out in \cite{AH89}, Hanani constructs frames with block size three and $\lambda = 2$ in his 1974 paper \cite{Han74}\footnote{We discuss frames with $\lambda > 1$ in greater detail in Section \ref{other.sec}.}. These frames are the same thing as \emph{near-resolvable $(v,3,2)$-BIBDs}. There are $v$ holey parallel classes in such a BIBD, each of which misses a single point (thus a necessary condition for existence is that $v \equiv 1 \bmod 3$). Hanani gives direct constructions for several such frames, and he also proves a PBD-closure result in \cite[Lemma 8]{Han74}. This establishes the existence of frames of type $1^{3u+1}$ (with block size three and $\lambda = 2$) for all positive integers $u$. 

Assaf and Hartman \cite{AH89} also note that the paper \cite{Han74}  constructs examples of frames with 
holes of size $t$, for $t =3,12,24$ (again, these frames have block size three and $\lambda = 2$).

It is somewhat difficult to identify the first paper to actually include a Kirkman frame construction (i.e., one with $\lambda = 1$  that incorporates a resolution into holey parallel classes).
Perhaps it is the 1977 Baker-Wilson paper  (\cite{BW77}) on nearly Kirkman triple systems (NKTS). 
Here is the statement and proof of a main recursive construction, exactly as it appears in \cite{BW77}.

\begin{theorem}
\cite[Lemma 3]{BW77}
If there exists a GDD on $u$ points with group sizes from $M = \{g_1, \dots , g_m\}$ and block sizes from 
$K = \{k_1, \dots , k_{\ell}\}$ such that: (i) for each $g_i$ there exists an NKTS$[2g_i+2]$, and (ii) for each $k_i$ there exists a KTS$[2k_i+1]$, then there exists an NKTS$[2u+2]$. 
\end{theorem}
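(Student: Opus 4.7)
The plan is a Wilson-style weighting construction in which every point of the GDD gets weight $2$ and two global infinity points $\infty_1,\infty_2$ are adjoined. Write $V$ for the point set of the GDD and set $W=V\times\{0,1\}\cup\{\infty_1,\infty_2\}$, so $|W|=2u+2$. I would aim to produce an NKTS on $W$ whose $1$-factor is
\[
\mathcal{F}=\{\{x^0,x^1\}:x\in V\}\cup\{\{\infty_1,\infty_2\}\}.
\]
The triples come from two kinds of ingredient designs: an NKTS$[2g_i+2]$ for each group of size $g_i$, and a Kirkman frame of type $2^{k_i}$ (equivalently, a KTS$[2k_i+1]$ with one point removed) for each block of size $k_i$. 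The two hypotheses of the theorem supply exactly these ingredients.

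More precisely, on each group $G_i$ I would place an NKTS$[2g_i+2]$ on the point set $G_i\times\{0,1\}\cup\{\infty_1,\infty_2\}$, arranged so that its $1$-factor is $\{\{x^0,x^1\}:x\in G_i\}\cup\{\{\infty_1,\infty_2\}\}$; any NKTS$[2g_i+2]$ can be relabeled to achieve this, since its $g_i+1$ $1$-factor pairs can be identified with the desired pairs in any bijection. Index its $g_i$ parallel classes of triples by the points of $G_i$, calling the $x$-th class $P_{i,x}$. On each block $B$ I would take a KTS$[2|B|+1]$, delete one point, and relabel to obtain a Kirkman frame of type $2^{|B|}$ on $B\times\{0,1\}$ whose holes are exactly the pairs $\{x^0,x^1\}$, $x\in B$; its $|B|$ holey parallel classes are then indexed by $B$, with $Q_{B,x}$ denoting the one that misses the hole $\{x^0,x^1\}$.

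The key step is to assemble, for each $x\in V$, the parallel class on $W$ defined by
\[
\Pi_x \;=\; P_{i(x),x}\ \cup\ \bigcup_{B\in\mathcal{B},\,B\ni x} Q_{B,x},
\]
where $i(x)$ is the index of the group containing $x$. Because each $y\in V\setminus G_{i(x)}$ lies in a unique block $B$ with $x$, the point sets $G_{i(x)}\times\{0,1\}\cup\{\infty_1,\infty_2\}$ and $(B\setminus\{x\})\times\{0,1\}$ for $B\ni x$ form a partition of $W$; hence $\Pi_x$ is a parallel class of triples on $W$. Pair coverage is then checked in cases: within-group pairs $\{y^a,z^b\}$ with $y\ne z$ in a common $G_i$, and pairs $\{y^a,\infty_j\}$, are each covered exactly once by the unique group NKTS containing both points; cross-group pairs are covered exactly once by the unique block Kirkman frame containing them; and the pairs in $\mathcal{F}$ lie in no triple because each such pair appears as a hole (or $1$-factor pair) in every ingredient design in which both endpoints occur. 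The total number of parallel classes is $\sum_i g_i=u=(2u+2-2)/2$, which is exactly the number required by an NKTS$[2u+2]$.

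The main delicate point is verifying that each triple lies in exactly one $\Pi_x$, so that the $\Pi_x$ really resolve the entire triple system. This is where the GDD axioms do all the work: the transversal property $|B\cap G_i|\le 1$ prevents a triple of a group NKTS from ever being simultaneously a triple of a block Kirkman frame, and the fact that any two blocks share at most one point (from $\lambda=1$ in the GDD) prevents a triple from lying in two different block Kirkman frames. Once this disjointness is in hand, uniqueness within the same NKTS$[2g_i+2]$ or within the same Kirkman frame of type $2^{|B|}$ is automatic, so the construction delivers the desired NKTS$[2u+2]$.
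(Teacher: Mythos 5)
Your construction is exactly the one in the paper (Baker--Wilson's Lemma 3), merely phrased in the Kirkman-frame language that the paper itself adopts immediately after reproducing the original proof: your $\infty_1,\infty_2$ are their $\theta',\theta''$, your $Q_{B,x}$ are their $\AAA_y^B$ (the parallel class of the block KTS through $\{*,y',y''\}$ with that triple removed), and your $\Pi_x$ is their $\AAA_y$. The argument is correct and essentially identical to the paper's.
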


\begin{proof}
Let the GDD have points $Y$ and sets $\GG \cup \BB$. The point set $X$ of the NKTS$[2u+2]$ is to consist of
$\theta'$, $\theta''$ and two symbols $y'$, $y''$ for each $y \in Y$. The matching is to be $\AAA_0 = \{ \{ \theta', \theta''\}\} \cup \{\{y',y''\}: y \in Y\}$, and the parallel classes are to be $\AAA_y$, $y \in Y$, obtained as follows:
For each $G \in \GG$, form an NKTS with matching $\{ \{ \theta', \theta''\}\} \cup \{\{y',y''\}: y \in G\}$
and parallel classes $\AAA_y^G$, arbitrarily indexed by the elements $y \in G$. For each $B \in \BB$, form a KTS on the points $y'$, $y''$, $y \in B$ and another symbol $*$ so that $\{*, y', y''\}$ is a triple of the KTS for each $y \in B$; the other triples which belong to the parallel class containing $\{*, y', y''\}$ will be denoted by $\AAA_y^B$ (these triples partition the set of $z'$, $z''$ with $z \in B$, $z \neq y$). Then the parallel class $\AAA_y$ is to be the union of $\AAA_y^G$, where $G$ is the unique member of $\GG$ containing $y$, and all the classes $\AAA_y^B$, where $B$ is a member of $\BB$ containing $y$.
\end{proof}

Now where is the Kirkman frame in the above proof?  Let $B \in \BB$ be any block. For each $y \in B$, we observe that each $\AAA_y^B$ is a holey parallel class with hole $\{y', y''\}$. Further, the union of these holey parallel classes forms a Kirkman frame of type $2^{|B|}$. We can describe this construction informally, omitting details and using Kirkman frame-based language, as follows:
\begin{enumerate}
\item start with a GDD, take two copies of every point and take two new points
\item replace every block $B$ by a Kirkman frame of type $2^{|B|}$, and 
\item replace every group $G$ by an NKTS$[2|G| + 2]$ containing the two new points.
\end{enumerate} 

Finally, I should  mention  the paper by Lee and Furino (\cite{LF95}) entitled 
\emph{A translation of J. X. Lu's ``an existence theory for resolvable balanced incomplete block designs''}. Lu's 1984 paper is another early example of a paper that uses Kirkman frame techniques in the context of resolvable designs.
According to Furino and Lee,
\begin{quote}
\emph{``We have remained faithful to the constructions provided by Lu but have altered the presentation to be more consistent with contemporary techniques and notation. Specifically, Lu’s Theorem 4 is presented via frames which are never explicitly mentioned in the original but which are clearly embedded in the constructions.''
}
\end{quote}

\section{Other Kinds of Frames}
\label{other.sec}

Various generalizations of frames have been studied. We briefly discuss some possible aspects that have been considered.

\subsection{Frames with Larger Block Size} 

Room frames have \emph{block size} two and Kirkman frames have block size three. Frames with larger block size can also be studied.
The most obvious generalization of Kirkman frames would be frames with block size four, which are often known as \emph{$4$-frames}
(analogously, a frame with block size $k$ is termed a \emph{$k$-frame}).
These were first studied systematically by Rees and Stinson in \cite{RS92}, and additional results can be found in \cite{CSZ97,WG14,ZG07}. The following theorem summarizes the current existence results for $4$-frames.

\begin{theorem}
There exists a $4$-frame of type $h^u$ if and only if $u\geq 5$, $h \equiv 0 \pmod{3}$ and 
$h(u - 1)  \equiv 0 \pmod{4}$, except
possibly where
\begin{enumerate}
\item  $h = 36$ and $u = 12$;
\item  $h \equiv 6 \pmod{12}$ and
\begin{enumerate}
\item $h = 6$ and $u \in \{7, 23, 27, 35, 39, 47\}$;
\item $h = 18$ and $u \in \{15, 23, 27\}$;
\item $h \in \{30, 66, 78, 114, 150, 174, 222, 246, 258, 282, 318, 330, 354, 534\}$ and\\ 
$u \in \{7, 23, 27, 39, 47\}$;
\item $h \in \{n : 42 \leq n \leq 11238\} \setminus \{66, 78, 114, 150, 174, 222, 246, 258, 282, 318, 330, 354, 534\}$ and $u \in \{23, 27\}$.
\end{enumerate}
\end{enumerate}
\end{theorem}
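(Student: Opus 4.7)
The plan is to mirror the strategy used for Kirkman frames in the previous section: combine necessary-condition counting with a small library of directly constructed base frames, and then push these through the recursive machinery (PBD-closure, a GDD-filling construction analogous to Theorem \ref{GDD.const}, and inflation by MOLS) to cover all admissible pairs $(h,u)$ apart from the listed possible exceptions.

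For the necessary conditions I would count blocks and pairs. Each block has its four points in four distinct holes, so $u \geq 5$ is forced (a holey parallel class missing one hole still requires blocks using four of the remaining holes). A holey parallel class missing a hole of size $h$ partitions the remaining $h(u-1)$ points into blocks of size four, so $4 \mid h(u-1)$. A pair-count through a fixed hole shows that each hole is missed by exactly $h/3$ holey parallel classes, forcing $3 \mid h$. For sufficiency, the workhorse is the $4$-frame analogue of Theorem \ref{GDD.const}: given a GDD of type $g^u$ with block-size set $K$ and weight $w \geq 1$, if a $4$-frame of type $w^k$ exists for every $k \in K$, then a $4$-frame of type $(gw)^u$ exists. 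In particular, for any fixed $h$, the set of positive integers $u$ for which a $4$-frame of type $h^u$ exists is PBD-closed; combined with Theorem \ref{inflation.thm}, it suffices to realize a finite seed set of values of $u$ drawn from a PBD-generating set such as $K_{\geq 5}$.

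I would then organize the argument by the residue class of $h$ modulo $12$. Cases $h \equiv 0 \pmod{12}$ inflate cleanly from a Kirkman frame of type $(h/2)^u$ by filling each block with a $4$-frame of type $2^4$. Cases $h \equiv 4,8 \pmod{12}$ are handled by inflating from a small collection of base $4$-frames of types $4^u$ and $8^u$ constructed via orthogonal (frame) starters. Cases $h \equiv 3,9 \pmod{12}$ force $u$ odd, and after directly constructing $4$-frames of type $3^u$ for enough small odd $u$ via starter-based techniques, PBD-closure (specialized to odd $u$) finishes the job.

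The genuine obstacle is the class $h \equiv 6 \pmod{12}$, which accounts for essentially all of the exceptional list. Here the divisibility conditions place no parity restriction on $u$, and the natural inflation---doubling a $4$-frame of type $3^u$---would demand MOLS of order two and so is unavailable. A $4$-frame of type $6^u$ must therefore be constructed more or less from scratch for each new $u$. The papers \cite{RS92,CSZ97,WG14,ZG07} attack this both via direct constructions using orthogonal (intransitive) frame starters in cyclic and non-cyclic groups, and via a delicate recursion based on incomplete $4$-frames that fills in a prescribed subdesign. The residual exceptional pairs listed in the theorem---essentially the small $h \equiv 6 \pmod{12}$ paired with $u \in \{7,23,27,35,39,47\}$, together with the outlier $(h,u)=(36,12)$---are precisely the pairs for which neither the direct nor the incomplete-frame approach has yet succeeded.
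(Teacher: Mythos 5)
There is no in-paper proof to compare against: the paper states this theorem purely as a survey of the combined results of \cite{RS92,CSZ97,WG14,ZG07} and offers no argument, so the only question is whether your sketch would stand on its own. It would not. Your necessary-conditions paragraph is sound (the counts giving $u\geq 5$, $4\mid h(u-1)$ and, via the fact that a hole of size $h$ is missed by exactly $h/3$ holey parallel classes, $3\mid h$ are all correct), and the recursive framework you invoke (the $4$-frame analogue of Theorem \ref{GDD.const}, PBD-closure, inflation by MOLS) is indeed the machinery those papers use. But the sufficiency outline contains concrete errors. A $4$-frame of type $2^4$ cannot exist --- it violates both $u\geq 5$ and $3\mid h$, the very conditions you just derived --- and in any case filling the blocks of a Kirkman frame (block size three) would require $4$-frames with only three holes, which is impossible; so your treatment of $h\equiv 0\pmod{12}$ collapses. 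Likewise, for $h\equiv 3,9\pmod{12}$ the condition $4\mid h(u-1)$ with $h$ odd forces $u\equiv 1\pmod 4$, not merely ``$u$ odd,'' so the PBD-closure step must be run over a generating set for that congruence class, not over $K_{\geq 5}$.

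More fundamentally, the entire content of the theorem is the precise list of possible exceptions, and your final paragraph does not derive it: saying the exceptional pairs ``are precisely the pairs for which neither the direct nor the incomplete-frame approach has yet succeeded'' is a description of the state of the literature, not a proof. To actually establish the statement one must exhibit the finite seed lists of $4$-frames (types $3^u$, $6^u$, $9^u$, $12^u$, \dots for specific small $u$), verify the PBD/GDD ingredients exist for each congruence class, and track exactly which $(h,u)$ escape every construction --- this is the detailed case analysis occupying the four cited papers, and no shortcut around it is offered here.
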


We should also mention that it is easy to see that a resolvable $(v,k,1)$-BIBD is equivalent to a $k$-frame of type $(k-1)^r$, where $r = (v-1)/(k-1)$. It suffices to delete a point from the BIBD to construct the frame, and the process can be reversed.

\subsection{Frames with Larger Dimension} 

Room frames have \emph{dimension} $two$ (since they have two orthogonal frame resolutions) and, analogously, Kirkman frames have dimension one.  Frames of higher dimension have also received attention.  For example, it was shown in \cite{DS80} that a Room frame of type $2^q$ having  dimension $t$ can be constructed 
if $q = 2^k t + 1$ is a prime power, $t \geq 3$ is odd and $k \geq 2$. 
There have also been numerous papers addressing the case of two-dimensional Kirkman frames; see \cite{DAW15} for recent results.

\subsection{Frames for Graph Decompositions}

The ``blocks'' in Room and Kirkman frames correspond to complete graphs ($K_2$ and $K_3$, resp.) \emph{Graph decompositions} into graphs other than complete graphs are also possible in a frame setting.

\emph{Cycle frames} were introduced in \cite{ASSW89}. Here we have a decomposition of a complete multipartite graph into holey parallel classes of cycles of lengths three and five. These cycle frames were useful in solving the uniform Oberwolfach problem for odd length cycles. There has also been considerable research done on cycle frames where there is only one cycle length; see \cite{BCDT17} for a very general existence result.
 
Another variation is  \emph{$K_{1,3}$-frames} \cite{CC17}. This is a frame-type graph decomposition into graphs isomorphic to  $K_{1,3}$. And, as might be expected, a  \emph{$(K_{4} - e)$-frame} (see \cite{CSZ97}) involves decompositions into graphs isomorphic to $K_4$ with an edge deleted.
 
\subsection{Frames with $\lambda > 1$}

Various types of frames can also be considered for $\lambda > 1$, i.e., where every pair in the underlying design occurs $\lambda > 1$ times. For example, two-dimensional Room frames having block size three and $\lambda = 2$ were first studied in \cite{La91,LV93}.

In general, we can consider frames with block size $k$ and a specified value of $\lambda$; such a frame is commonly denoted as a $(k,\lambda)$-frame.   For example, necessary and sufficient existence conditions for $(3,\lambda)$-frames are determined in \cite{AH89}. There also has been considerable work done on $(4,3)$-frames; see, e.g., \cite{FKLMY,Ge01,GLL}.

Finally, a near-resolvable $(v,k,k-1)$-BIBD is the same thing as a $(k,k-1)$-frame of type $1^v$, since there are $v$ holey parallel classes in the BIBD, each missing one point
(we already mentioned this result in Section \ref{hist.sec} in the case $k=3$).


\subsection{Nonuniform Frames}

A \emph{nonuniform} frame is one in which not all the holes have the same size. Nonuniform frames are often used in constructions of uniform frames. Also, Room squares or Kirkman triple systems containing subdesigns are equivalent to certain frames where all but one of the holes have the same size. For example, a Room square of side $v$ containing a Room subsquare of side $w$ is equivalent to a Room frame of type $1^{v-w}w^1$. Additionally, various other classes of nonuniform frames have been studied, e.g., Room frames of type
$2^tu^1$ (\cite{DSZ94}).

\subsection{Frames with Special Properties}

There has been some study of frames with various special properties. We have already mentioned skew Room frames in Section \ref{intro.sec}.
Other examples of Room frames having special properties that have been studied include \emph{partitionable skew Room frames} \cite{CSZ97,ZG07}, as well as
Room frames with \emph{partitionable transversals} \cite{DL00}.

Finally, I should mention a generalization of frames known as \emph{double frames}. These objects were defined by Chang and Miao in \cite{CM02}, where they were used to unify various frame-type recursive constructions. One application of double frames is to construct frames, e.g., see \cite{WG14}.

\subsection{Equiangular Tight Frames}

It is  not surprising that a term such as ``frame'' could have multiple meanings, even within combinatorial mathematics. However, it is probably not to be expected that the phrase ``Kirkman frame'' would arise in two completely different contexts. 
To be specific, ``Kirkman equiangular tight frames and codes'' is the title of a 2014 paper by Jasper, Mixon and Fickus 
 \cite{JMF14}.
 
In this context, the term ``frame'' can be found in the 1992 book by Daubechies \cite{Da92}; it refers to a generalization of an orthonormal basis. The \emph{equiangular tight frames} (see \cite{StHe}) meet the Welch bound with equality. Many of the known constructions for these objects use combinatorial designs such as conference matrices, difference sets and Steiner systems.
In \cite{JMF14}, a construction utilizing Kirkman triple systems was proposed; the resulting frames were termed ``Kirkman equiangular tight frames.'' Thus we have two completely different notions of a Kirkman frame!

\section{Applications}

In this paper, I have concentrated on construction methods for Room frames and Kirkman frames. However, I would be remiss if I did not mention some applications of frames. Here are a few ``obvious'' applications. Room frames were essential in proving the existence of Room squares with subsquares (see \cite{DSZ94,DW10}), and analogously, Kirkman frames were of crucial importance in constructing Kirkman triple systems with Kirkman subsystems (\cite{RS89}). Skew Room frames were a very important tool in solving the skew Room square existence problem (\cite{St81c}), and Kirkman frames were employed in the construction of resolvable group-divisible designs with block size three (\cite{RS86}). The survey by Rees and Wallis \cite{RW03} is especially useful as it gives detailed discussion and self-contained proofs of several important applications of Kirkman frames.
Finally, frames with larger block size (in particular, $4$-frames) are utilized in similar problems relating to designs with larger block size (see, e.g., \cite{WG14}).

There are many other applications that are perhaps not immediately obvious. I will list a few representative examples now; however, I should note that this is far from being a complete list.
\begin{itemize}
\item Skew Room frames were used in \cite{LRS} and elsewhere to construct nested cycle systems.
\item Skew Room frames were used in \cite{Ro94,RWCZ} to construct weakly $3$-chromatic BIBDs with block size four.
\item Partitionable skew Room frames can be used to construct resolvable $(K_{4} - e)$-designs (see \cite{CSZ97}). For  information on  partitionable skew Room frames, see \cite{ZG07}.
\item $4$-frames have been used to construct three mutually orthogonal latin squares with holes (i.e., three HMOLS); see \cite{SZ91,CSZ97}.
\item Uniformly resolvable designs, which were introduced by Rees \cite{Re87}, have been studied by numerous authors.
Their construction often employs frames. Two recent papers on this topic are \cite{WG16,WG17}.
\item Ling \cite{Li03} uses splittable $4$-frames to give improved results concerning a problem in generalized Ramsey theory involving edge-colourings of $K_{n,n}$.
\end{itemize}

\end{document}